\providecommand{\U}[1]{\protect\rule{.1in}{.1in}}
\newtheorem{theorem}{Theorem}
\newtheorem{lemma}[theorem]{Lemma}
\newtheorem{proposition}[theorem]{Proposition}
\newtheorem{remark}[theorem]{Remark}
\newenvironment{proof}[1][Proof]{\noindent\textbf{#1.} }{\ \rule{0.5em}{0.5em}}
\begin{document}

\title{Semiclassical measures and the Schr\"{o}dinger flow on Riemannian manifolds}
\author{Fabricio Maci\`{a}\thanks{This research has been supported by program
\emph{Juan de la Cierva }(MEC, Spain) and projects MAT2005-05730-C02-02 (MEC,
Spain) and HYKE (E.U. ref. HPRN-CT-2002-00282).}\\Universidad Polit\'{e}cnica de Madrid\\ETSI Navales\\Avda. Arco de la Victoria, s/n. 28040 Madrid Spain\\E-mail address: \texttt{fabricio.macia@upm.es}}
\date{}
\maketitle

\begin{abstract}
In this article we study limits of Wigner distributions (the so-called
semiclassical measures) corresponding to sequences of solutions to the
semiclassical Schr\"{o}dinger equation at times scales $\alpha_{h}$ tending to
infinity as the semiclassical parameter $h$ tends to zero (when $\alpha
_{h}=1/h$ this is equivalent to consider solutions to the non-semiclassical
Schr\"{o}dinger equation). Some general results are presented, among which a
weak version of Egorov's theorem that holds in this setting. A complete
characterization is given for the Euclidean space and Zoll manifolds (that is,
manifolds with periodic geodesic flow) via averaging formulae relating the
semiclassical measures corresponding to the evolution to those of the initial
states. The case of the flat torus is also addressed; it is shown that
non-classical behavior may occur when energy concentrates on resonant
frequencies. Moreover, we present an example showing that the semiclassical
measures associated to a sequence of states no longer determines those of
their evolutions. Finally, some results concerning the equation with a
potential are presented. \medskip

\noindent\textbf{Mathematics Subject Classification:} Primary 81Q20; Secondary
37J35, 37N20, 58J47.

\end{abstract}

\section{Introduction}

The \emph{quantum-classical correspondence principle }roughly states that
quantum systems behave according to classical mechanics in the high-frequency
limit. A particular case that has attracted special attention corresponds to
taking as the underlying classical system the geodesic flow on a complete
Riemannian manifold $\left(  M,g\right)  $. Its quantum counterpart is the
Schr\"{o}dinger flow, \emph{i.e. }the unitary group $e^{ith\Delta/2}$
generated by the Laplace-Beltrami operator $\Delta$ on $L^{2}\left(  M\right)
$. In order to relate its high-frequency properties to the geodesic flow, one
tries to determine the limiting behavior as $h\rightarrow0$ of the position
densities $\left\vert \psi_{h}\left(  t,\cdot\right)  \right\vert ^{2}$
associated to solutions to the Schr\"{o}dinger equation:%
\begin{equation}
ih\partial_{t}\psi_{h}\left(  t,x\right)  +\frac{h^{2}}{2}\Delta\psi
_{h}\left(  t,x\right)  =0\qquad\left(  t,x\right)  \in\mathbf{R}\times M,
\label{SE}%
\end{equation}
issued from a sequence of highly oscillating initial data $\psi_{h}%
|_{t=0}=u_{h}$, whose characteristic lengths of oscillations are of order $h$.
One expects that in this limit the dynamics of $\left\vert \psi_{h}\left(
t,\cdot\right)  \right\vert ^{2}$ are somehow related to the geodesic flow.

Usually, it is preferable to consider instead of $\left\vert \psi_{h}\left(
t,\cdot\right)  \right\vert ^{2}$ the so-called \emph{Wigner distribution }of
$\psi_{h}$ defined on the cotangent bundle $T^{\ast}M$. Given a solution
$\psi\left(  t,\cdot\right)  =e^{ith\Delta/2}u\in L^{2}\left(  M\right)  $,
its Wigner distribution $W_{u}^{h}\left(  t,\cdot\right)  $ acts on smooth,
compactly supported test functions $a\in C_{c}^{\infty}\left(  T^{\ast
}M\right)  $ as:%
\begin{equation}
\left\langle W_{u}^{h}\left(  t,\cdot\right)  ,a\right\rangle
:=(\operatorname*{op}\nolimits_{h}\left(  a\right)  e^{ith\Delta
/2}u|e^{ith\Delta/2}u). \label{SQA}%
\end{equation}
Above, $\left(  \cdot|\cdot\right)  $ denotes the inner product in
$L^{2}\left(  M\right)  $ and $\operatorname*{op}\nolimits_{h}\left(
a\right)  $ stands for the semiclassical pseudodifferential operator of symbol
$a$ obtained by Weyl's quantization rule (see Section \ref{SecSSS} and the
references therein for precise definitions and further properties of these
objects). The Wigner distribution behaves, in some sense, as a joint position
and momentum density: it is real, although not necessarily positive, and its
marginals are precisely the position and momentum densities of $\psi$ (a
detailed presentation may be found, for instance, in the book \cite{Fo}).
Therefore, the limit of $\left\vert \psi_{h}\left(  t,\cdot\right)
\right\vert ^{2}$ may be recovered from that of $W_{u}^{h}\left(
t,\cdot\right)  $ simply by projecting on $M$.

There are different regimes in which the correspondence principle can be made
precise in the form of a rigorous result.\medskip

\noindent\textbf{The semiclassical limit.} Given a sequence of initial data
$\left(  u_{h}\right)  $ bounded in $L^{2}\left(  M\right)  $, consider the
corresponding Wigner distributions $W_{u_{h}}^{h}$ given by (\ref{SQA}). It is
by now well-known that the distributions $W_{u_{h}}^{h}\left(  t,\cdot\right)
$ converge as $h\rightarrow0^{+}$ (after possibly extracting a subsequence) to
a family of positive measures $\mu\left(  t,\cdot\right)  $, continuous in
time, usually called \emph{semiclassical }or\emph{ Wigner measures}. It turns
out that this limits are transported along the geodesic flow $\phi_{t}$ on
$T^{\ast}M$:\footnote{This is a classical result that has been revisited and
extended by many authors. A rigorous proof of this (and further results in
that direction) in the recent mathematical literature may be found, for
instance, in \cite{Ge91c, Li-Pau, Bu97a} (se also \cite{Robert}).}%
\begin{equation}
\lim_{h\rightarrow0^{+}}(\operatorname*{op}\nolimits_{h}\left(  a\right)
e^{ith\Delta/2}u_{h}|e^{ith\Delta/2}u_{h})=\int_{T^{\ast}M}a\left(  \phi
_{-t}\left(  x,\xi\right)  \right)  d\mu\left(  0,x,\xi\right)  .
\label{limSc}%
\end{equation}
Moreover, if $\left(  u_{h}\right)  $ oscillates at some characteristic
length-scale $h$ (see hypothesis (\ref{h-osc}) in Section \ref{SecR}) then the
position densities $|e^{ith\Delta/2}u_{h}|^{2}$ weakly converge towards the
marginal $\int_{T_{x}^{\ast}M}\mu\left(  t,x,d\xi\right)  $. This is the
precise sense in which we recover classical dynamics as $h\rightarrow0^{+}$ in
this particular setting.

This kind of result holds in any compact Riemannian manifold, regardless of
its particular geometric properties. However, only \emph{small times} (of
order $h$) are considered in the limit (\ref{limSc}). This prevents the
dispersive nature of the Schr\"{o}dinger flow to become effective. Since the
proof of (\ref{limSc}) relies essentially on Egorov's theorem,\ statement
(\ref{limSc}) still holds for times of order $T_{\text{E}}^{h}:=C\log\left(
1/h\right)  $;\footnote{This is the \emph{Ehrenfest time}, the time up to
which, in a general system, a wavepacket remains localized (see for instance,
\cite{BGP, BouzRobertEhrenfest, CombRobert, HJ99, HJ00, SchubertCMP} for
rigorous results in this direction).} that is, when rescaling the Wigner
distribution as $W_{u_{h}}^{h}\left(  T_{\text{E}}^{h}t,\cdot\right)
$.\medskip

\noindent\textbf{Eigenfunction limits.} Another approach, which gives results
that are valid for any time scale, consists of assuming that $M$ is compact
and taking as initial data eigenfunctions of $-\Delta$. If $\left(
\psi_{\lambda_{k}}\right)  $ is a sequence of normalized eigenfunctions,
$-\Delta\psi_{\lambda_{k}}=\lambda_{k}\psi_{\lambda_{k}}$ with $\lambda
_{k}\rightarrow\infty$, then the corresponding solutions to the
Schr\"{o}dinger equation (\ref{SE}) are $e^{ith\Delta/2}\psi_{\lambda_{k}%
}=e^{-ith\lambda_{k}/2}\psi_{\lambda_{k}}$. The associated Wigner
distributions act on $a\in C_{c}^{\infty}\left(  T^{\ast}M\right)  $ as:%
\begin{equation}
\left(  \operatorname*{op}\nolimits_{h}\left(  a\right)  \psi_{\lambda_{k}%
}|\psi_{\lambda_{k}}\right)  . \label{QL}%
\end{equation}
After setting $h=h_{k}=1/\sqrt{\lambda_{k}}$ and taking limits in (\ref{QL}),
a semiclassical measure is obtained (in this context, sometimes also called a
\emph{quantum limit}). Note that, since the Wigner distributions are
time-independent, the limits of (\ref{QL}) are \emph{uniform in time}.
Moreover, quantum limits are invariant under the geodesic flow and are
supported in the unit cosphere bundle $S^{\ast}M$. The main issue in this
setting is that of identifying the set of all possible invariant measures on
$S^{\ast}M$ that can be realized as a quantum limit.

This problem is, in general, very hard and depends heavily on the specific
geometry of the manifold under consideration. For instance, when the geodesic
flow is ergodic the celebrated Schnirelman theorem asserts that for a sequence
of eigenvalues of density one, (\ref{QL}) converges to the Liouville measure
on $S^{\ast}M$. Therefore, most sequences of eigenfunctions become
equidistributed on $M$ (see the original article of Schnirelman \cite{Shni}
and \cite{Zelditch87, CdV85, HMR, GeLei, RudSar, AnantAnosov, AnantNonn},
among many others, for various extensions and improvements). In the case of
completely integrable geodesic flow the situation is quite different. For
instance, when $\left(  M,g\right)  $ is the sphere $\mathbf{S}^{d}$ equipped
with the canonical metric Jakobson an Zelditch proved in \cite{JakZel} that
every invariant measure on $S^{\ast}\mathbf{S}^{d}$ may be realized as a
quantum limit for some sequence of normalized eigenfunctions.\footnote{An
extension of this result to general Compact Rank-One Symmetric Spaces can be
found in \cite{MaciaZoll}.} For a more comprehensive account of the results
quoted so far the reader may consult, for instance, \cite{DimSjos, Robert,
Zelditch96}.\medskip

\noindent\textbf{Intermediate time scales.} In this article we are interested
in an intermediate regime. We shall analyze the structure of semiclassical
measures arising as limits of Wigner distributions of solutions to a class of
Schr\"{o}dinger equations at time scales $t_{h}=\alpha_{h}$ tending to
infinity as $h\rightarrow0^{+}$; witch can be in principle much greater than
the Ehrenfest time. One should expect that the dispersive effects associated
to the Schr\"{o}dinger flow would have to be taken into
account.\footnote{Recent results, perhaps of a different flavor, on
semiclassical dynamics beyond the Ehrenfest time may be found, for instance,
in \cite{Fau07, SchubertCMP}.}

It turns out that the highly oscillating nature of the propagator
$e^{it\Delta/2}$ prevents in general that the rescaled Wigner distributions
$W_{u_{h}}^{h}\left(  \alpha_{h}t,\cdot\right)  $ converge for all
$t\in\mathbf{R}$. Therefore, we shall study the relations between
time-averages of (\ref{SQA}) and the semiclassical measures of their
corresponding sequences of initial data. The existence of these limits is
established in Theorem \ref{ThmE}; Theorem \ref{ThmInvEg} shows that they are
invariant by the geodesic flow (as was the case for eigenfunction limits) and
that a weak from of Egorov's theorem holds for time scales $\alpha
_{h}=o\left(  h^{-2}\right)  $.

Then, in order to get a more detailed description of these limits, we examine
some examples of manifolds with completely integrable geodesic flow: Zoll
manifolds, Euclidean space and the flat torus. Under some assumptions on the
initial data we prove that limits of time averages of (\ref{SQA}) are
expressed as averages under the geodesic flow of the semiclassical measure of
the initial states (see Theorem \ref{ThmZoll} and Propositions \ref{PropRd},
\ref{Prop AvTorus}, which are proved in Section \ref{SecAv}). This is again a
manifestation of the correspondence principle.

However, it turns out that such a behavior may fail in general, even in the
completely integrable case. In Proposition \ref{PropQuantTorus}, which is
proved in Section \ref{SecT}, we present an example of initial data for which
the semiclassical measure of the corresponding evolved states does not obey
the averaging rule mentioned above. Instead of this, they evolve following a
law related to the Schr\"{o}dinger flow, thus exhibiting a genuinely quantum
behavior. Moreover, we show that there is no longer a formula relating it to
the semiclassical measure $\mu_{0}$ of the sequence of initial states. It is
possible to construct sequences having the same $\mu_{0}$ but such that the
limits of the time averages of (\ref{SQA}) differ.

Finally, in Section \ref{SecV} we discuss how our results extend to more
general Schr\"{o}dinger equations.\medskip

\noindent\textbf{Notation and conventions. }In what follows, $\left(
M,g\right)  $ will always denote a connected, complete, $d$-dimensional smooth
Riemannian manifold possessing a semiclassical functional calculus (property
(F) in Section \ref{SecSSS}).

$T^{\ast}M$ and $S^{\ast}M$ stand for the cotangent and unit cosphere bundles
on $M$ respectively. Given a diffeomorphism $\Phi:M\rightarrow N$ between
smooth manifolds, we will denote by $\tilde{\Phi}:T^{\ast}M\rightarrow
T^{\ast}N$ the canonical diffeomorphisms induced by $\Phi$.

The Riemannian norm of a point $\left(  x,\xi\right)  \in T^{\ast}M$ is
denoted by $\left\Vert \xi\right\Vert _{x}$.

The geodesic flow on $T^{\ast}M$ is the Hamiltonian flow induced by the
Riemannian energy $\frac{1}{2}\left\Vert \xi\right\Vert _{x}^{2}$. It will be
denoted by $\phi_{t}$.

The Riemannian measure in $M$ will be denoted by $dm$. We shall write for
short $L^{2}\left(  M\right)  :=L^{2}\left(  M,dm\right)  $; the scalar
product of two functions $u,v\in L^{2}\left(  M\right)  $ will be written as
$\left(  u|v\right)  $.

The Riemannian gradient will be denoted by $\nabla$; the Laplacian is denoted
by $\Delta:=\operatorname*{div}\left(  \nabla\cdot\right)  $. It is a
self-adjoint operator on $L^{2}\left(  M\right)  $.

$\mathcal{M}\left(  T^{\ast}M\right)  $ (resp. $\mathcal{M}_{+}\left(
T^{\ast}M\right)  $) denotes the space of Radon measures (resp. the cone of
positive Radon measures) on $T^{\ast}M$. The space $\mathcal{M}\left(
T^{\ast}M\right)  $ may be identified, by Riesz's theorem, to the dual of the
space of continuous compactly supported functions $C_{c}\left(  T^{\ast
}M\right)  $.

A sequence of measures $\left(  \mu_{n}\right)  $ in $\mathcal{M}\left(
T^{\ast}M\right)  $ converges vaguely to some Radon measure $\mu$ as
$n\rightarrow\infty$ if and only if $\lim_{n\rightarrow\infty}\int_{T^{\ast}%
M}ad\mu_{n}=\int_{T^{\ast}M}ad\mu$ for every $n\rightarrow\infty$ and $a\in
C_{c}\left(  T^{\ast}M\right)  $.

A measure $\mu\in\mathcal{M}\left(  T^{\ast}M\right)  $ is invariant by a flow
$\phi_{t}$ on $T^{\ast}M$ if for any measurable set $X\subset T^{\ast}M$ one
has $\mu\left(  X\right)  =\mu\left(  \phi_{s}\left(  X\right)  \right)  $ for
every $s\in\mathbf{R}$. This can be equivalently stated as $\int_{T^{\ast}%
M}ad\mu=\int_{T^{\ast}M}a\circ\phi_{s}d\mu$ for every $s\in\mathbf{R}$ and
$a\in C_{c}\left(  T^{\ast}M\right)  $.

The space of compactly supported smooth functions on $T^{\ast}M$ will be
written as $C_{c}^{\infty}\left(  T^{\ast}M\right)  $; its dual, the space of
distributions on $T^{\ast}M$, will be denoted by $\mathcal{D}^{\prime}\left(
T^{\ast}M\right)  $. The duality bracket in $\mathcal{D}^{\prime}\left(
T^{\ast}M\right)  \times C_{c}^{\infty}\left(  T^{\ast}M\right)  $ will be
denoted by $\left\langle \cdot,\cdot\right\rangle $. Weak-$\ast$ convergence
in $\mathcal{D}^{\prime}\left(  T^{\ast}M\right)  $ will be simply referred to
as weak convergence.

Given a set $A\subset\mathbf{R}$, its characteristic function will be denoted
by $\mathbf{1}_{A}$.

\section{\label{SecR}Statement of the results}

Our first results describe some properties of the limits of Wigner
distributions at times $t=\alpha_{h}\rightarrow\infty$ corresponding to
solutions to (\ref{SE}) on a general Riemannian manifold. In Section
\ref{SecV} we comment on extensions of these results to more general
Schr\"{o}dinger equations.

We shall make some hypotheses on the initial states. As it is also the case
when dealing with the semiclassical limit, we shall assume that the admissible
sequences of initial data $\left(  u_{h}\right)  $ satisfy the $h$-oscillation
property:%
\begin{equation}
\limsup_{h\rightarrow0^{+}}\left\Vert \mathbf{1}_{\left(  -\infty,R\right)
}\left(  h^{2}\Delta\right)  \phi u_{h}\right\Vert _{L^{2}\left(  M\right)
}\rightarrow0,\qquad\text{as }R\rightarrow-\infty\text{, for every }\phi\in
C_{c}^{\infty}\left(  M\right)  . \label{h-osc}%
\end{equation}
When the spectrum of $\Delta$ is discrete, this roughly means that the energy
of $u_{h}$ is concentrated on Fourier modes corresponding to eigenvalues of
size at most $R/h^{2}$.

Moreover, we shall assume that their Wigner distributions converge to some
semiclassical measure $\mu_{0}\in\mathcal{M}_{+}\left(  T^{\ast}M\right)  $:
\begin{equation}
\lim_{h\rightarrow0^{+}}\left(  \operatorname*{op}\nolimits_{h}\left(
a\right)  u_{h}|u_{h}\right)  =\int_{T^{\ast}M}a\left(  x,\xi\right)  \mu
_{0}\left(  dx,d\xi\right)  ,\label{convID}%
\end{equation}
for every $a\in C_{c}^{\infty}\left(  T^{\ast}M\right)  $. This is always
achieved by some subsequence (provided that $\left(  u_{h}\right)  $ is
bounded in $L^{2}\left(  M\right)  $). See Proposition \ref{PropSCM} and, in
general, Section \ref{SecSSS} for notation and background concerning
pseudodifferential operators and Wigner distributions. 

Unless otherwise stated, we shall denote by $\left(  \alpha_{h}\right)  $ a
sequence of positive reals tending to infinity as $h\rightarrow0^{+}$.

\begin{theorem}
\label{ThmE}Let $\left(  u_{h}\right)  $ be a bounded sequence in
$L^{2}\left(  M\right)  $ satisfying hypotheses (\ref{h-osc}) and
(\ref{convID}). Then there exist a subsequence and a finite measure $\mu\in
L^{\infty}\left(  \mathbf{R}_{t}\mathbf{;}\mathcal{M}_{+}\left(  T^{\ast
}M\right)  \right)  $ such that the following statements hold.

i) For every $\varphi\in L^{1}\left(  \mathbf{R}\right)  $ and every $a\in
C_{c}^{\infty}\left(  T^{\ast}M\right)  $,
\begin{equation}
\lim_{h\rightarrow0^{+}}\int_{\mathbf{R}}\varphi\left(  t\right)
(\operatorname*{op}\nolimits_{h}\left(  a\right)  e^{i\alpha_{h}ht\Delta
/2}u_{h}|e^{i\alpha_{h}ht\Delta/2}u_{h})dt=\int_{\mathbf{R\times}T^{\ast}%
M}\varphi\left(  t\right)  a\left(  x,\xi\right)  \mu\left(  t,dx,d\xi\right)
dt. \label{conv}%
\end{equation}

ii) For every $\varphi\in L^{1}\left(  \mathbf{R}\right)  $ and $a\in
C_{c}\left(  M\right)  $ the evolved position densities satisfy:%
\begin{equation}
\lim_{h\rightarrow0+}\int_{\mathbf{R}\times M}\varphi\left(  t\right)
a\left(  x\right)  |e^{i\alpha_{h}ht\Delta/2}u_{h}\left(  x\right)
|^{2}dmdt=\int_{\mathbf{R\times}T^{\ast}M}\varphi\left(  t\right)  a\left(
x\right)  \mu\left(  t,dx,d\xi\right)  dt. \label{den}%
\end{equation}

\end{theorem}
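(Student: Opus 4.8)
The plan is to run a soft weak-compactness argument: first bound the time-dependent rescaled Wigner distributions uniformly, then extract a weak-$\ast$ limit $\mu$, and finally read off positivity, finiteness, and the position marginal from the symbolic calculus. Set $v_{h}(t):=e^{i\alpha_{h}ht\Delta/2}u_{h}$ and, for $a\in C_{c}^{\infty}(T^{\ast}M)$ and $\varphi\in L^{1}(\mathbf{R})$, put $I_{h}(\varphi,a):=\int_{\mathbf{R}}\varphi(t)\,(\operatorname{op}_{h}(a)v_{h}(t)|v_{h}(t))\,dt$. Unitarity of the propagator gives $\|v_{h}(t)\|_{L^{2}(M)}=\|u_{h}\|_{L^{2}(M)}$, bounded uniformly in $t$ and $h$, while the semiclassical Calder\'{o}n--Vaillancourt estimate yields $|(\operatorname{op}_{h}(a)v_{h}(t)|v_{h}(t))|\le C\|u_{h}\|_{L^{2}(M)}^{2}\,N(a)$ for a fixed continuous seminorm $N$ on $C_{c}^{\infty}(T^{\ast}M)$ (finitely many derivatives of $a$); strong continuity of $t\mapsto v_{h}(t)$ makes the integrand continuous, so $I_{h}$ is well defined and $|I_{h}(\varphi,a)|\le C\|\varphi\|_{L^{1}(\mathbf{R})}\|u_{h}\|_{L^{2}(M)}^{2}N(a)$. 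Picking countable families dense in $L^{1}(\mathbf{R})$ and in $C_{c}^{\infty}(T^{\ast}M)$ and extracting diagonally, I get a subsequence along which $I_{h}(\varphi,a)$ converges for every admissible pair to a bilinear functional $I$ obeying the same bound. For fixed $a$, $\varphi\mapsto I(\varphi,a)$ is bounded on $L^{1}(\mathbf{R})$, hence equals $\varphi\mapsto\int_{\mathbf{R}}\varphi(t)\langle\mu(t,\cdot),a\rangle\,dt$ for some $\langle\mu(\cdot),a\rangle\in L^{\infty}(\mathbf{R})$ of norm $\le CN(a)$; linearity in $a$, the seminorm bound and separability then let me fix a single null set off which $a\mapsto\langle\mu(t,\cdot),a\rangle$ is a distribution. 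This produces $\mu\in L^{\infty}(\mathbf{R}_{t};\mathcal{D}^{\prime}(T^{\ast}M))$ and establishes statement (i).

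Positivity and finiteness are added on top of (i). For $a\ge0$ in $C_{c}^{\infty}(T^{\ast}M)$ the sharp G\aa rding inequality gives $(\operatorname{op}_{h}(a)v_{h}(t)|v_{h}(t))\ge-Ch\,\|u_{h}\|_{L^{2}(M)}^{2}N^{\prime}(a)$, so $I(\varphi,a)\ge0$ whenever $\varphi\ge0$; hence $\langle\mu(t,\cdot),a\rangle\ge0$ for a.e.\ $t$ and every $a$ in a countable dense cone of nonnegative test functions, so $\mu(t,\cdot)$ is a positive distribution---and therefore a positive Radon measure---for a.e.\ $t$ (this is precisely the per-time content of the theory of Wigner measures, cf.\ Proposition~\ref{PropSCM}). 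Since $\|\operatorname{op}_{h}(\chi)\|_{L^{2}(M)\to L^{2}(M)}\le C$ for $0\le\chi\le1$, one has $\langle\mu(t,\cdot),\chi\rangle\le C\limsup_{h}\|u_{h}\|_{L^{2}(M)}^{2}$, which bounds the total mass; thus $\mu\in L^{\infty}(\mathbf{R}_{t};\mathcal{M}_{+}(T^{\ast}M))$, as required.

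For (ii), a symbol independent of $\xi$ is Weyl-quantized as multiplication, so $\int_{M}a(x)|v_{h}(t,x)|^{2}\,dm=(\operatorname{op}_{h}(a\circ\pi)v_{h}(t)|v_{h}(t))$, with $\pi:T^{\ast}M\to M$ the bundle projection. I would split $a\circ\pi=(a\circ\pi)\theta(\|\xi\|_{x}^{2})+(a\circ\pi)(1-\theta(\|\xi\|_{x}^{2}))$, with $\theta\in C_{c}^{\infty}(\mathbf{R})$ equal to $1$ near $0$. The first piece has a symbol compactly supported in the fibers and is handled by part (i) (replacing $\operatorname{op}_{h}(\theta(\|\xi\|_{x}^{2}))$ by $\theta(h^{2}\Delta)$ up to $O(h)$, via the functional calculus, property (F)). For the second piece, a Cauchy--Schwarz and a disjoint-support argument---inserting a cutoff $\phi\in C_{c}^{\infty}(M)$ with $\phi\equiv1$ on $\operatorname{supp}a$---bounds its contribution to (\ref{den}) by $\int_{\mathbf{R}}|\varphi(t)|\,\|(1-\theta(h^{2}\Delta))\phi v_{h}(t)\|_{L^{2}(M)}\,dt$ up to an $O(h)$ error, and it remains to show that this quantity is small, uniformly in $h$, as $\theta\nearrow1$; letting $h\to0^{+}$ and then $\theta\nearrow1$ then yields (\ref{den}).

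The step I expect to be the main obstacle is exactly this last estimate: controlling the $h$-oscillation defect of the evolved states $v_{h}(t)$, uniformly in $h$ and suitably in $t$. When $M$ is compact one may take $\phi\equiv1$, and since $\theta(h^{2}\Delta)$ commutes with $e^{i\alpha_{h}ht\Delta/2}$ one has $\|(1-\theta(h^{2}\Delta))v_{h}(t)\|_{L^{2}(M)}=\|(1-\theta(h^{2}\Delta))u_{h}\|_{L^{2}(M)}$, so (\ref{h-osc}) transfers immediately and uniformly in $t$. When $M$ is non-compact the cutoff $\phi$ cannot be commuted past the Schr\"{o}dinger propagator (infinite propagation speed), and a more careful analysis is needed: commutators of $\phi$ with functions of $h^{2}\Delta$ are $O(h)$ in $L^{2}(M)\to L^{2}(M)$ by (F), while any non-$h$-oscillating mass that reaches $\operatorname{supp}\phi$ from spatial infinity must travel fast enough that its sojourn time near $\operatorname{supp}\phi$ is $o(1)$, hence negligible once integrated against $\varphi\in L^{1}(\mathbf{R})$. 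This analysis, together with the duality bookkeeping that places $\mu$ in $L^{\infty}(\mathbf{R}_{t};\mathcal{M}_{+}(T^{\ast}M))$ rather than merely in $\mathcal{D}^{\prime}(\mathbf{R}\times T^{\ast}M)$, is where the genuine work of the proof lies.
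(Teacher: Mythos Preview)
Your argument for part~(i) is correct but takes a different route from the paper. The paper works with the full space--time Wigner distribution $W_h\in\mathcal{D}'(T^{\ast}(\mathbf{R}\times M))$, extracts a limiting positive Radon measure $\tilde{\mu}$ there, and then uses the identity $\tfrac{h}{\alpha_h}D_t\psi_h=\tfrac{h^2}{2}\Delta\psi_h$ together with~(\ref{h-osc}) to show that cutting off in the dual variable $\tau$ is harmless; the desired $\mu$ is then $\int_{\mathbf{R}}\tilde{\mu}(\cdot,d\tau,\cdot)$. Your direct $L^{1}(\mathbf{R})$--$L^{\infty}(\mathbf{R})$ duality and diagonal extraction avoid this detour and do not even need~(\ref{h-osc}) for part~(i); conversely, the paper's route gets positivity and the measure interpretation for free (as a standard semiclassical measure on $T^{\ast}(\mathbf{R}\times M)$) rather than via G\aa rding as you do.

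For part~(ii) your skeleton coincides with the paper's, and you correctly locate the crux: transferring~(\ref{h-osc}) from $u_h$ to $\psi_h(t)$. But your proposed resolution in the non-compact case --- a sojourn-time argument for high-frequency mass arriving from infinity --- is not how the paper proceeds and is not a proof as stated: the Schr\"{o}dinger propagator has infinite speed, and turning ``fast travel'' into a quantitative $o(1)$ time-integrated bound would require dispersive input (local smoothing, resolvent estimates) that lies well outside the symbolic calculus you are invoking. The paper's argument is much simpler and never tries to commute $\phi$ past the propagator. One commutes $\phi$ past $\theta(h^{2}\Delta)$ instead, which is $O(h)$ by property~(F), and then uses that $\theta(h^{2}\Delta)$ commutes \emph{exactly} with $e^{i\alpha_{h}ht\Delta/2}$. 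This yields
\[
\bigl\|(1-\theta(h^{2}\Delta))\phi\,\psi_{h}(t)\bigr\|_{L^{2}(M)}\le\bigl\|(1-\theta(h^{2}\Delta))u_{h}\bigr\|_{L^{2}(M)}+O(h),
\]
uniformly in $t$, and the right-hand side is controlled by~(\ref{h-osc}) (immediately when $M$ is compact by taking $\phi\equiv1$). No propagation or sojourn-time analysis is needed.
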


In general, the convergence in (\ref{conv}) does not hold pointwise. Several
examples of such a behavior will be presented in our next results.

\begin{theorem}
\label{ThmInvEg}Let $\mu$ and $\mu_{0}$ be obtained as a limit (\ref{conv})
and (\ref{convID}), respectively. Then the following hold.\medskip

i) For almost every $t\in\mathbf{R}$, the measure $\mu\left(  t,\cdot\right)
$ is invariant under the geodesic flow $\phi_{s}$, \emph{i.e.}%
\begin{equation}
\mu\left(  t,\phi_{s}\left(  \Omega\right)  \right)  =\mu\left(
t,\Omega\right)  ,\qquad\text{for every }s\in\mathbf{R}\text{ and }%
\Omega\subset T^{\ast}M\text{ measurable.} \label{invariance}%
\end{equation}

ii) If $a\in C_{c}^{\infty}\left(  T^{\ast}M\right)  $ is invariant under the
classical flow and $\alpha_{h}=o\left(  1/h^{2}\right)  $ then the following
holds pointwise, \emph{for every }$t\in\mathbf{R}$:%
\begin{equation}
\lim_{h\rightarrow0^{+}}(\operatorname*{op}\nolimits_{h}\left(  a\right)
e^{i\alpha_{h}ht\Delta/2}u_{h}|e^{i\alpha_{h}ht\Delta/2}u_{h})=\int_{T^{\ast
}M}a\left(  x,\xi\right)  \mu_{0}\left(  dx,d\xi\right)  . \label{CI}%
\end{equation}

\end{theorem}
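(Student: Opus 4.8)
The plan is to prove the two parts of Theorem \ref{ThmInvEg} separately, both resting on the "weak Egorov" philosophy: commute $\operatorname{op}_h(a)$ past the propagator $e^{i\alpha_h h t\Delta/2}$ at the cost of an $O(\alpha_h h)$ error plus higher-order terms governed by iterated brackets with the generator $\tfrac12\|\xi\|_x^2$. The key computational identity is the Heisenberg–type evolution
\begin{equation}
\frac{d}{ds}\bigl(e^{is h\Delta/2}\operatorname{op}_h(a)e^{-ish\Delta/2}\bigr)=e^{ish\Delta/2}\,\frac{i}{h}\Bigl[\tfrac{h^2}{2}\Delta,\operatorname{op}_h(a)\Bigr]\,e^{-ish\Delta/2},
\label{Heis}
\end{equation}
where symbolic calculus gives $\tfrac{i}{h}[\tfrac{h^2}{2}\Delta,\operatorname{op}_h(a)]=\operatorname{op}_h(\{\tfrac12\|\xi\|_x^2,a\})+h^2 R_h$ with $R_h$ uniformly bounded on $L^2$. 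Integrating \eqref{Heis} from $0$ to $\alpha_h h t$ is the central step of part (ii).

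For part (ii), assume $a$ is invariant under $\phi_s$, so the Poisson bracket $\{\tfrac12\|\xi\|_x^2,a\}$ vanishes identically. Then by symbolic calculus the principal term in $\tfrac{i}{h}[\tfrac{h^2}{2}\Delta,\operatorname{op}_h(a)]$ is zero, and what remains is $O(h^2)$ in operator norm; more precisely $\operatorname{op}_h(a)$ commutes with $e^{ish\Delta/2}$ up to an error of operator norm $O(h^2)$ per unit of $s$. Integrating \eqref{Heis} over $s\in[0,\alpha_h h t]$ yields
\begin{equation}
e^{i\alpha_h ht\Delta/2}\operatorname{op}_h(a)e^{-i\alpha_h ht\Delta/2}=\operatorname{op}_h(a)+O\bigl(|t|\,\alpha_h h\cdot h^2\bigr)=\operatorname{op}_h(a)+O\bigl(|t|\,\alpha_h h^3\bigr)
\label{Egorov-est}
\end{equation}
in operator norm. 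Since $\alpha_h=o(h^{-2})$ forces $\alpha_h h^3=o(h)\to0$, the right-hand side converges to $\operatorname{op}_h(a)$, and inserting this into $(\operatorname{op}_h(a)e^{i\alpha_h ht\Delta/2}u_h\mid e^{i\alpha_h ht\Delta/2}u_h)=(e^{-i\alpha_h ht\Delta/2}\operatorname{op}_h(a)e^{i\alpha_h ht\Delta/2}u_h\mid u_h)$ and using \eqref{convID} together with boundedness of $(u_h)$ in $L^2$ gives \eqref{CI}. One subtlety: $\operatorname{op}_h(a)$ is compactly supported in $T^\ast M$ but the generator $\tfrac12\|\xi\|_x^2$ is not compactly supported; this is harmless because the commutator only sees $a$ on its support, and on a manifold with semiclassical functional calculus the operator norm bounds in the symbolic expansion hold with constants controlled by finitely many seminorms of $a$. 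The $h$-oscillation hypothesis \eqref{h-osc} is what lets us localize energy to a bounded frequency band and ignore the behavior of $\tfrac12\|\xi\|_x^2$ at infinity when checking that error terms stay in the symbol classes.

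For part (i), the invariance of $\mu(t,\cdot)$, the cleanest route is to test against $a\in C_c^\infty(T^\ast M)$ and $b\in C_c^\infty(M)$ separated from the geodesic flow direction, but more directly: fix $s\in\mathbf{R}$ and $\varphi\in L^1(\mathbf{R})$, and study $\int_{\mathbf{R}}\varphi(t)(\operatorname{op}_h(a\circ\phi_s)e^{i\alpha_h ht\Delta/2}u_h\mid e^{i\alpha_h ht\Delta/2}u_h)\,dt$. Using Egorov's theorem at the fixed time scale $s$ (not $\alpha_h h t$), $\operatorname{op}_h(a\circ\phi_s)=e^{-ish\Delta/2}\operatorname{op}_h(a)e^{ish\Delta/2}+O(h)$, we rewrite the inner product as $(\operatorname{op}_h(a)e^{i(\alpha_h t+s)h\Delta/2}u_h\mid e^{i(\alpha_h t+s)h\Delta/2}u_h)+O(h)$. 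Now reparametrize $t\mapsto t-s/\alpha_h$ in the $t$-integral; since $s/\alpha_h\to0$ and $\varphi\in L^1$, the difference $\|\varphi(\cdot-s/\alpha_h)-\varphi\|_{L^1}\to0$, so passing to the limit $h\to0$ via \eqref{conv} shows $\int\varphi(t)\langle\mu(t,\cdot),a\circ\phi_s\rangle\,dt=\int\varphi(t)\langle\mu(t,\cdot),a\rangle\,dt$. As $\varphi$ is arbitrary in $L^1(\mathbf{R})$ and $a$ arbitrary in $C_c^\infty$, we conclude $\langle\mu(t,\cdot),a\circ\phi_s\rangle=\langle\mu(t,\cdot),a\rangle$ for a.e.\ $t$, which is \eqref{invariance}; a standard argument handles the null set uniformly over a countable dense family of $(a,s)$.

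The main obstacle is controlling the error terms in \eqref{Egorov-est} uniformly over the long time interval $[0,\alpha_h h t]$. Naively Egorov gives an $O(h)$ error per unit time that would blow up like $\alpha_h h\cdot h=\alpha_h h^2$, which is merely bounded (not small) at the borderline $\alpha_h\sim h^{-2}$; the gain to $O(h^2)$ per unit time is precisely what the invariance $\{\tfrac12\|\xi\|_x^2,a\}=0$ buys us, killing the would-be principal symbol of the commutator. Making this rigorous requires tracking that the subprincipal correction $R_h$ in the symbolic expansion of $\tfrac{i}{h}[\tfrac{h^2}{2}\Delta,\operatorname{op}_h(a)]$ is genuinely $O(h^2)$ in operator norm with a constant depending only on finitely many derivatives of $a$ on a fixed compact set — this is standard pseudodifferential calculus on manifolds admitting a semiclassical functional calculus, but it is the technical heart of the argument and is where property (F) and hypothesis \eqref{h-osc} are actually used.
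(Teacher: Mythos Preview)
Your argument for part (i) is correct but takes a different route from the paper. The paper differentiates $t\mapsto(\operatorname{op}_h(a)\psi_h(t)\mid\psi_h(t))$, integrates against $\varphi'$, divides by $\alpha_h$, and uses the commutator expansion (\ref{CS}) to obtain $\int_{T^\ast M}\{a,p\}\,d\mu(t,\cdot)=0$ directly. Your approach---fixed-time Egorov plus the reparametrization $t\mapsto t-s/\alpha_h$ and $\|\varphi(\cdot-s/\alpha_h)-\varphi\|_{L^1}\to0$---is equally valid and perhaps more conceptual; it makes clear that invariance comes from the time-averaging absorbing a bounded shift. The paper's route is slightly more self-contained since it only uses the commutator identity already stated, without invoking the full Egorov theorem.

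For part (ii) there is a genuine gap. Your claim that $\tfrac{i}{h}[\tfrac{h^2}{2}\Delta,\operatorname{op}_h(a)]=\operatorname{op}_h(\{\tfrac12\|\xi\|_x^2,a\})+h^2R_h$ with $R_h$ bounded is false on a general Riemannian manifold. Property (C) in the paper gives $-h^2\Delta=\operatorname{op}_h(p)+ih\operatorname{op}_h(r)+h^2\operatorname{op}_h(m)$, where $r=\tfrac12\{p,\log\rho\}$ encodes the Riemannian density; consequently (\ref{CS}) yields
\[
\tfrac{i}{h}\bigl[\tfrac{h^2}{2}\Delta,\operatorname{op}_h(a)\bigr]=-\tfrac12\operatorname{op}_h(\{a,p\})-\tfrac{ih}{2}\operatorname{op}_h(\{a,r\})+O(h^2).
\]
When $\{a,p\}=0$ this is $O(h)$, not $O(h^2)$. (Also, your integration interval should be $[0,\alpha_h t]$, not $[0,\alpha_h h t]$, since $e^{ish\Delta/2}=e^{i\alpha_h ht\Delta/2}$ forces $s=\alpha_h t$.) With the correct interval and the correct $O(h)$ bound, your operator-norm error in (\ref{Egorov-est}) is $O(\alpha_h h\,|t|)$, which need not vanish under the hypothesis $\alpha_h=o(h^{-2})$---take for instance $\alpha_h=h^{-3/2}$.

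The paper resolves this not by an operator-norm estimate but by a parity argument at the level of the quadratic form. Since $a$ and $\{a,r\}$ are real, $\operatorname{op}_h(a)$ and $\operatorname{op}_h(\{a,r\})$ are self-adjoint (property (E)), so $(\operatorname{op}_h(a)\psi_h\mid\psi_h)$ is real while the subprincipal contribution $-\tfrac{i\alpha_h h}{2}\int_0^t(\operatorname{op}_h(\{a,r\})\psi_h\mid\psi_h)\,ds$ is purely imaginary. Taking real parts of the integrated identity kills this dangerous term, leaving only the genuine $O(\alpha_h h^2)=o(1)$ remainder. This real/imaginary decomposition is the missing idea in your argument; without it the proof does not close for $\alpha_h$ in the full range $o(h^{-2})$.
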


\begin{remark}
The restriction $\alpha_{h}=o\left(  1/h^{2}\right)  $ in part ii) of the
theorem can be removed in some cases, as the Euclidean space $\mathbf{R}^{d}$
or the flat torus $\mathbf{T}^{d}$. Its presence is related to the commutation
properties with $\Delta$ of $\operatorname*{op}\nolimits_{h}\left(  a\right)
$ when $a$ is invariant. Further details are given in Remark \ref{RmkComm}, in
Section \ref{SecZ}.
\end{remark}

Part i) is a consequence of the time averaging over large time intervals. Part
ii) establishes that we can still keep track of the pointwise behavior of the
Wigner distributions at large time scales, provided we test them against an
invariant classical symbol. This can be interpreted as a weak form of Egorov's
theorem for long times. An analogue of Theorem \ref{ThmE} and of part i) of
Theorem \ref{ThmInvEg} in terms of microlocal defect measures (see for
instance \cite{BurqMDAsterisque, GerardMDM} for background) can be found in
\cite{DeGeLe}.

In order to obtain a more detailed description and, in particular, to derive
formulas that allow to compute $\mu$ in terms of the semiclassical measure of
the initial data $\mu_{0}$, we must restrict the geometry of the manifolds
under consideration. We shall consider examples of manifolds with completely
integrable geodesic flow

We first consider the case of \emph{Zoll manifolds} (that is, manifolds all
whose geodesics are closed). We refer to the book \cite{Besse} for a
comprehensive study of this geometric hypothesis. Such manifolds are compact,
and the restriction of the geodesic flow $\phi_{t}$ to the unit cosphere
bundle $S^{\ast}M$ is periodic. Given a function $a\in C_{c}\left(  T^{\ast
}M\right)  $ we write $\left\langle a\right\rangle $ to denote the average of
$a$ along the geodesic flow:%
\begin{equation}
\left\langle a\right\rangle \left(  x,\xi\right)  :=\lim_{T\rightarrow\infty
}\frac{1}{T}\int_{0}^{T}a\left(  \phi_{s}\left(  x,\xi\right)  \right)  ds.
\label{aver}%
\end{equation}
Since, by the homogeneity of the flow, every trajectory is periodic, the limit
above always exists. Moreover, $\left\langle a\right\rangle $ is bounded and measurable.

\begin{theorem}
\label{ThmZoll}Suppose $\left(  M,g\right)  $ is a manifold all of whose
geodesics are closed and $\alpha_{h}=o\left(  1/h^{2}\right)  $. Let $\mu_{0}$
be the semiclassical measure given by (\ref{convID}) for some sequence of
initial data satisfying (\ref{h-osc}). If $\mu_{0}\left(  \left\{
\xi=0\right\}  \right)  =0$ then any limit $\mu$ given by (\ref{conv}) is
characterized by:%
\begin{equation}
\int_{T^{\ast}M}a\left(  x,\xi\right)  \mu\left(  t,dx,d\xi\right)
=\int_{T^{\ast}M}\left\langle a\right\rangle \left(  x,\xi\right)  \mu
_{0}\left(  dx,d\xi\right)  ,\qquad\text{for a.e. }t\in\mathbf{R}\text{.}
\label{averZ}%
\end{equation}

\end{theorem}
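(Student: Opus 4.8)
The plan is to compute the time derivative of $t\mapsto\langle W_{u_h}^h(\alpha_h t,\cdot),a\rangle$ and show that, after the scaling, the leading contribution is driven by the geodesic flow rather than by a faster oscillation. Concretely, for $a\in C_c^\infty(T^\ast M)$ set $f_h(t):=(\operatorname{op}_h(a)e^{i\alpha_h ht\Delta/2}u_h\,|\,e^{i\alpha_h ht\Delta/2}u_h)$. Differentiating and using the equation, $\tfrac{d}{dt}f_h(t)=\tfrac{i\alpha_h h}{2}([\Delta,\operatorname{op}_h(a)]\,e^{i\alpha_h ht\Delta/2}u_h\,|\,e^{i\alpha_h ht\Delta/2}u_h)$. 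Since $h^2\Delta=\operatorname{op}_h(\|\xi\|_x^2)+O(h)$ acting on $h$-oscillating data, the symbolic calculus gives $\tfrac{\alpha_h h}{2}\cdot\tfrac{1}{h}[\operatorname{op}_h(\|\xi\|^2),\operatorname{op}_h(a)] = \alpha_h h\cdot\operatorname{op}_h\big(\tfrac{1}{2i}\{\|\xi\|^2,a\}\big)+O(\alpha_h h\cdot h)=\alpha_h h\,\operatorname{op}_h(\tfrac{1}{i}\mathcal L a)+o(\alpha_h h)$, where $\mathcal L$ is the generator of the geodesic flow on $T^\ast M$. Thus $\tfrac{d}{dt}f_h(t)=(\alpha_h h)\big[(\operatorname{op}_h(\mathcal L a)e^{i\alpha_h ht\Delta/2}u_h\,|\,e^{i\alpha_h ht\Delta/2}u_h)+o_h(1)\big]$.

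The next step is a renormalization: replace $a$ by its geodesic average $\langle a\rangle$, or rather work with the difference. On a Zoll manifold the flow on $S^\ast M$ is periodic with period $T_0$ (after the standard Weinstein normalization), so $\langle a\rangle$ is the orthogonal projection of $a$ onto flow-invariant functions and one can solve the cohomological equation $\mathcal L b = a - \langle a\rangle$ with $b\in C_c^\infty$ away from $\{\xi=0\}$ (this is where $\mu_0(\{\xi=0\})=0$ enters — one first cuts $a$ off near the zero section, controlling the error by $\mu_0$-mass there, as in Theorem \ref{ThmInvEg}). Then $a-\langle a\rangle = \mathcal L b$, so by the computation above $(\alpha_h h)(\operatorname{op}_h(a-\langle a\rangle)e^{i\alpha_h ht\Delta/2}u_h\,|\,\cdot) = \tfrac{d}{dt}(\operatorname{op}_h(b)\,\cdots\,|\,\cdots) + o_h(1)$ uniformly for $t$ in compacts. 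Integrating against $\varphi\in C_c^\infty(\mathbf R)$ and integrating by parts in $t$: $\int\varphi(t)(\operatorname{op}_h(a-\langle a\rangle)e^{i\alpha_h ht\Delta/2}u_h\,|\,\cdots)\,dt = -\tfrac{1}{\alpha_h h}\int\varphi'(t)(\operatorname{op}_h(b)\,\cdots\,|\,\cdots)\,dt + o_h(1) = o_h(1)$ because $\alpha_h h\to\infty$ (as $\alpha_h\to\infty$ and, for the error terms, $\alpha_h h^2\to 0$ by the hypothesis $\alpha_h=o(1/h^2)$) and $(\operatorname{op}_h(b)\cdots|\cdots)$ is bounded. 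Passing to the limit, $\int_{\mathbf R\times T^\ast M}\varphi(t)(a-\langle a\rangle)\,d\mu(t)\,dt=0$ for all $\varphi$, hence $\int_{T^\ast M} a\,\mu(t,\cdot) = \int_{T^\ast M}\langle a\rangle\,\mu(t,\cdot)$ for a.e.\ $t$.

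It remains to replace the right-hand side by the integral against $\mu_0$. Since $\langle a\rangle$ is flow-invariant, part ii) of Theorem \ref{ThmInvEg} (applied to $\langle a\rangle$, after a smoothing/approximation argument because $\langle a\rangle$ need not be smooth at the zero section but is smooth away from it) gives that $(\operatorname{op}_h(\langle a\rangle)e^{i\alpha_h ht\Delta/2}u_h\,|\,\cdots)$ converges pointwise in $t$ to $\int_{T^\ast M}\langle a\rangle\,d\mu_0$; integrating against $\varphi$ and passing to the limit identifies $\int_{T^\ast M}\langle a\rangle\,\mu(t,\cdot) = \int_{T^\ast M}\langle a\rangle\,d\mu_0$ for a.e.\ $t$. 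Chaining the two identities yields \eqref{averZ}.

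The main obstacle will be the regularity of $\langle a\rangle$ together with the behaviour near $\{\xi=0\}$: the averaged symbol is only smooth on $T^\ast M\setminus 0$, so all of the pseudodifferential manipulations (solving $\mathcal L b = a-\langle a\rangle$, applying Egorov via Theorem \ref{ThmInvEg}) must be carried out after excising a conic neighbourhood of the zero section, and the excised part must be shown to carry negligible mass. This is exactly where $\mu_0(\{\xi=0\})=0$ is used — one takes $\chi_\varepsilon$ supported away from $\xi=0$ with $\chi_\varepsilon\to 1$ and controls $\limsup_h |(\operatorname{op}_h((1-\chi_\varepsilon)a)e^{i\alpha_h ht\Delta/2}u_h\,|\,\cdots)|$ by a quantity tending to $0$ as $\varepsilon\to 0$, using that the semiclassical measure of the evolved states still gives no mass to $\{\xi=0\}$ (this last fact follows because $\{\xi=0\}$ is flow-invariant and the bound propagates; it is implicit in the structure of Theorem \ref{ThmE}). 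A secondary technical point is making the error estimates in the symbolic calculus uniform in $t$ over the support of $\varphi$, which is routine given the $h$-oscillation hypothesis \eqref{h-osc} and the $o(1/h^2)$ bound on $\alpha_h$.
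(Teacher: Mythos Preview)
Your approach is essentially correct and close in spirit to the paper's, but it contains a scaling error and an unnecessary detour, and the handling of the zero section is only sketched.

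\textbf{The scaling error.} Your computation of $\tfrac{d}{dt}f_h(t)$ is off by a factor of $h$. From \eqref{CS} one gets
\[
\frac{d}{dt}\bigl(\operatorname{op}_h(b)\psi_h\,\big|\,\psi_h\bigr)=-\frac{\alpha_h}{2}\bigl(\operatorname{op}_h(\{b,p\})\psi_h\,\big|\,\psi_h\bigr)+O(\alpha_h h),
\]
so after integration by parts the prefactor is $1/\alpha_h$, not $1/(\alpha_h h)$. This matters because your justification ``$\alpha_h h\to\infty$'' is simply false under the paper's hypotheses (take $\alpha_h=h^{-1/2}$). Fortunately the correct prefactor $1/\alpha_h$ tends to zero since $\alpha_h\to\infty$, so the argument survives once the computation is fixed; the hypothesis $\alpha_h=o(1/h^2)$ is only needed later, in the application of Theorem~\ref{ThmInvEg}~ii).

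\textbf{The redundant step.} Solving the cohomological equation $\mathcal L b=a-\langle a\rangle$ and integrating by parts reproves, for this particular $b$, exactly the invariance statement Theorem~\ref{ThmInvEg}~i). The paper avoids this entirely: since $\mu(t,\cdot)$ is already known to be $\phi_s$-invariant, one has directly
\[
\int_{T^\ast M} a\,d\mu(t)=\int_{T^\ast M} a\circ\phi_s\,d\mu(t)\quad\text{for all }s,
\]
and averaging in $s$ (dominated convergence) gives $\int a\,d\mu(t)=\int\langle a\rangle\,d\mu(t)$. Combined with Theorem~\ref{ThmInvEg}~ii) applied to the smooth invariant symbol $\langle a\rangle$, this is the paper's Lemma~\ref{Lemma av}, and the whole proof of Theorem~\ref{ThmZoll} reduces to observing that on a Zoll manifold $\langle a\rangle$ is smooth whenever $a$ vanishes near $\{\xi=0\}$.

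\textbf{The zero section.} Your claim that $\mu(t,\{\xi=0\})=0$ ``propagates'' from $\mu_0$ is correct but not for the reason you give (flow-invariance of $\{\xi=0\}$ alone does not suffice). The paper argues as follows: once \eqref{averZ} is established for $a$ vanishing near $\{\xi=0\}$, one has $\mu(t,T^\ast M\setminus\{0\})=\mu_0(T^\ast M\setminus\{0\})$; since $M$ is compact, \eqref{den} forces the total masses of $\mu(t,\cdot)$ and $\mu_0$ to coincide, whence $\mu(t,\{\xi=0\})=\mu_0(\{\xi=0\})=0$. Alternatively, one may apply Theorem~\ref{ThmInvEg}~ii) to invariant symbols of the form $\sigma(\lVert\xi\rVert_x^2)$ with $\sigma\in C_c^\infty(\mathbf R)$ supported near $0$.
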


In particular, if $\left(  u_{h}\right)  $ is such that $\mu_{0}\left(
x,\xi\right)  =\delta_{x_{0}}\left(  x\right)  \delta_{\xi_{0}}\left(
\xi\right)  $ for some $\left(  x_{0},\xi_{0}\right)  \in T^{\ast}%
M\setminus\left\{  0\right\}  $ --$\left(  u_{h}\right)  $ is then called a
wave-packet, see Proposition \ref{PropCS}-- then $\mu=\delta_{\gamma}$ is the
Dirac delta on the geodesic $\gamma$ issued from $\left(  x_{0},\xi
_{0}\right)  $.\footnote{That is, $\delta_{\gamma}$ is the unique invariant
probability measure on $T^{\ast}M$ which is concentrated on $\gamma$. This is
sometimes also called the orbit measure corresponding to $\gamma$.} From this,
using a diagonal argument, it is clear that if $\left(  M,g\right)  $ is a
Zoll manifold then every measure on $T^{\ast}M\setminus\left\{  0\right\}  $
that is invariant under the geodesic flow can be realized as a limit
(\ref{conv}) for some sequence of initial data. This can be seen as a time
dependent version of the result of Jakobson and Zelditch \cite{JakZel} for
eigenfunctions of the Laplacian on the sphere we quoted in the introduction.
Actually, Theorem \ref{ThmZoll} can also be applied to obtain results on
quantum limits; in particular, it can be used to extend the result in
\cite{JakZel} to a general Compact Rank-One Symmetric Space (see
\cite{MaciaZoll}).

\begin{remark}
As it will be clear from the proof, Theorem \ref{ThmZoll} holds locally in the
following sense. If the geometric hypothesis on $\left(  M,g\right)  $ is
replaced by the weaker: \emph{there exist an open set }$X\subset T^{\ast}%
M$\emph{, invariant under the geodesic flow, such that }$\phi_{s}|_{X}$\emph{
is periodic on each of the cospheres }$\left\Vert \xi\right\Vert _{x}%
=$\emph{constant}, then (\ref{averZ}) holds for every $a\in C_{c}^{\infty
}\left(  X\right)  $.
\end{remark}

The proof of the theorem follows from a general result which relates the
smoothness properties of the averages $\left\langle a\right\rangle $ to the
time-pointwise behavior of Wigner distributions (\emph{cf. }Lemma
\ref{Lemma av} in Section \ref{SecZ}, which is of independent interest).

The consequence of the corresponding result on Euclidean space is trivial.

\begin{proposition}
\label{PropRd}Suppose $\left(  M,g\right)  =\left(  \mathbf{R}^{d}%
,\text{\emph{can}}\right)  $ and $\left(  u_{h}\right)  $ is a sequence that
satisfies (\ref{h-osc}) and (\ref{convID}). If its semiclassical measure
$\mu_{0}$ satisfies $\mu_{0}\left(  \left\{  \xi=0\right\}  \right)  =0$ then
any measure $\mu$ given by (\ref{conv}) vanishes identically. In other words,%
\[
\lim_{h\rightarrow0^{+}}\int_{\mathbf{R}}\varphi\left(  t\right)
(\operatorname*{op}\nolimits_{h}\left(  a\right)  e^{i\alpha_{h}ht\Delta
/2}u_{h}|e^{i\alpha_{h}ht\Delta/2}u_{h})dt=0,\qquad\text{for every }a\in
C_{c}^{\infty}\left(  T^{\ast}\mathbf{R}^{d}\right)  \text{.}%
\]

\end{proposition}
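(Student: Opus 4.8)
The plan is to derive this from the invariance statement in part i) of Theorem \ref{ThmInvEg} together with a structural fact about flow-invariant measures on $T^{\ast}\mathbf{R}^{d}$ that are "classical" in the sense of being pushed forward from an initial semiclassical measure. First, I would note that on Euclidean space the geodesic flow is the free flow $\phi_{s}(x,\xi)=(x+s\xi,\xi)$. By Theorem \ref{ThmInvEg} i), for almost every $t$ the measure $\mu(t,\cdot)$ is invariant under this flow. The key observation is that a \emph{finite} (indeed, the total mass is controlled by $\limsup\|u_{h}\|_{L^{2}}^{2}$) positive Radon measure on $T^{\ast}\mathbf{R}^{d}$ that is invariant under $(x,\xi)\mapsto(x+s\xi,\xi)$ must be supported on $\{\xi=0\}$: on the set $\{\xi\neq 0\}$, every orbit of the flow is a full line $\mathbf{R}\ni s\mapsto x+s\xi$, which has infinite length, so a finite invariant measure cannot charge any such orbit without forcing infinite mass (formally, disintegrate $\mu(t,\cdot)$ over the $\xi$-variable and the quotient of position space by the flow direction; invariance forces the conditional measures in the $x$-direction to be translation-invariant along lines, hence multiples of Lebesgue measure on $\mathbf{R}$, which are infinite unless zero). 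Hence $\mu(t,\{\xi\neq 0\})=0$ for a.e.\ $t$.

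Second, I would rule out mass on $\{\xi=0\}$. Here I use the hypothesis $\mu_{0}(\{\xi=0\})=0$ together with the $h$-oscillation property \eqref{h-osc}. The point is that \eqref{h-osc} is stable under the Schr\"odinger evolution: since $e^{i\alpha_{h}ht\Delta/2}$ commutes with $\mathbf{1}_{(-\infty,R)}(h^{2}\Delta)$ and preserves the $L^{2}$ norm, the evolved states $e^{i\alpha_{h}ht\Delta/2}u_{h}$ satisfy \eqref{h-osc} as well (the commutator with the cutoff $\phi\in C_{c}^{\infty}(M)$ needs the standard elliptic-regularity argument that $[\phi,\mathbf{1}_{(-\infty,R)}(h^{2}\Delta)]$ is negligible, exactly as in the proof that \eqref{h-osc} passes to semiclassical measures). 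Consequently every limit measure $\mu(t,\cdot)$ also gives no mass to $\{\xi=0\}$ over the relevant region; combined with the previous paragraph, $\mu(t,\cdot)=0$ for a.e.\ $t$.

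Third, I would translate $\mu\equiv 0$ back into the stated limit. By part i) of Theorem \ref{ThmE}, \eqref{conv} holds with right-hand side $\int_{\mathbf{R}\times T^{\ast}\mathbf{R}^{d}}\varphi(t)a(x,\xi)\,\mu(t,dx,d\xi)\,dt=0$ for every $\varphi\in L^{1}(\mathbf{R})$ and $a\in C_{c}^{\infty}(T^{\ast}\mathbf{R}^{d})$; since the extracted subsequence was arbitrary and all subsequential limits vanish, the full sequence converges to $0$, which is precisely the displayed formula.

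The main obstacle I anticipate is the careful justification of the two structural facts used above: (a) that a finite flow-invariant measure for the free flow is concentrated on $\{\xi=0\}$ — this is morally the statement that "there is no dispersion-free behavior away from $\xi=0$ on $\mathbf{R}^{d}$", and it requires a clean disintegration-and-invariance argument rather than hand-waving about orbit lengths; and (b) the propagation of the $h$-oscillation property \eqref{h-osc} to the evolved states and hence to $\mu(t,\cdot)$, where the localization in $x$ provided by $\phi$ interacts with the long-time evolution $\alpha_{h}ht$ and one must check the relevant commutators are genuinely small in the semiclassical limit. Both are standard but deserve being spelled out; the rest is bookkeeping with Theorems \ref{ThmE} and \ref{ThmInvEg}.
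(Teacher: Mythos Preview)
Your overall strategy matches the paper's: invariance under the free flow forces a finite $\mu(t,\cdot)$ to be supported on $\{\xi=0\}$, and then one rules out mass there using the hypothesis on $\mu_0$. Steps~1 and~3 are fine and essentially identical to the paper. However, your step~2 has a genuine gap. The $h$-oscillation condition \eqref{h-osc} is a \emph{high-frequency} condition: it controls the mass escaping to $\|\xi\|\to\infty$, not the mass near $\xi=0$. Preserving \eqref{h-osc} under the Schr\"odinger flow (which is indeed easy, since the propagator commutes with spectral projections) therefore says nothing about $\mu(t,\{\xi=0\})$. You also cannot argue by propagating the hypothesis $\mu_0(\{\xi=0\})=0$ directly, since that is a statement about the limit measure rather than a quantitative condition on the sequence $(u_h)$; in particular it need not come from a condition like \eqref{stricho} that would propagate in the way you describe.

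The paper closes this gap much more simply: it observes that the $\xi$-marginal of $\mu(t,\cdot)$ coincides with that of $\mu_0$. On $\mathbf{R}^d$ this is immediate, because for any $b\in C_c^\infty(\mathbf{R}^d_\xi)$ the operator $\operatorname{op}_h(b)=b(hD)$ is a Fourier multiplier commuting \emph{exactly} with $e^{i\alpha_h ht\Delta/2}$, so $(b(hD)\psi_h(t)\,|\,\psi_h(t))=(b(hD)u_h\,|\,u_h)$ for every $t$; this is precisely the mechanism behind Theorem~\ref{ThmInvEg}~ii) applied to $\xi$-only (flow-invariant) symbols, cf.\ Remark~\ref{RmkComm}. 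It follows that $\mu(t,\{\xi=0\})=\mu_0(\{\xi=0\})=0$, and the argument concludes as you indicated.
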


Note that Proposition \ref{PropRd} can also be deduced from the $H^{1/2}%
$-regularizing effect of the Schr\"{o}dinger equation (see for instance
\cite{ConstSaut}).

\begin{remark}
The condition $\mu_{0}\left(  \left\{  \xi=0\right\}  \right)  =0$ roughly
means that the sequence $\left(  u_{h}\right)  $ cannot develop oscillations
at frequencies lower than $h^{-1}$. It holds when%
\begin{equation}
\limsup_{h\rightarrow0^{+}}\left\Vert \mathbf{1}_{\left(  \delta,0\right]
}\left(  h^{2}\Delta\right)  \phi u_{h}\right\Vert _{L^{2}\left(  M\right)
}\rightarrow0,\qquad\text{as }\delta\rightarrow0^{-}\text{, for every }\phi\in
C_{c}^{\infty}\left(  M\right)  \text{.} \label{stricho}%
\end{equation}

\end{remark}

\begin{remark}
On any Riemannian manifold, one easily checks that the limit (\ref{conv})
corresponding to the constant sequence $u_{h}:=f\in L^{2}\left(  M\right)  $
is given, for every $t\in\mathbf{R}$, by:%
\[
\mu\left(  t,x,\xi\right)  =|e^{it\Delta/2}f\left(  x\right)  |^{2}%
dx\delta_{0}\left(  \xi\right)  .
\]
Thus, the conclusions of Theorem \ref{ThmZoll} and Proposition \ref{PropRd}
may not hold when $\mu_{0}\left(  \left\{  \xi=0\right\}  \right)  \not =0$.
\end{remark}

\begin{remark}
Analogues of these results hold for Schr\"{o}dinger equations with a
potential, see Theorem \ref{ThmPeriodic} and Remark \ref{RmkV} in Section
\ref{SecV}.
\end{remark}

Our last set of results deal with the flat torus $\left(  \mathbf{T}%
^{d},\text{can}\right)  $. We shall identify $\mathbf{T}^{d}$ with the
quotient $\mathbf{R}^{d}/\left(  2\pi\mathbf{Z}\right)  ^{d}$ and $T^{\ast
}\mathbf{T}^{d}$ to $\mathbf{T}^{d}\times\mathbf{R}^{d}$. Consider the set of
\emph{resonant frequencies}:
\[
\Omega:=\left\{  \xi\in\mathbf{R}^{d}\text{ }:\text{ }k\cdot\xi=0\text{ for
some }k\in\mathbf{Z}^{d}\setminus\left\{  0\right\}  \right\}  .
\]
We again get an averaging type result, provided our sequence of initial data
does not concentrate on $\Omega$.

\begin{proposition}
\label{Prop AvTorus}Suppose $\mu$ and $\mu_{0}$ are given respectively by
(\ref{conv}) and (\ref{convID}) for some sequence $\left(  u_{h}\right)  $
bounded in $L^{2}\left(  \mathbf{T}^{d}\right)  $ satisfying (\ref{h-osc}). If
$\mu_{0}\left(  \mathbf{T}^{d}\times\Omega\right)  =0$ then, for a.e.
$t\in\mathbf{R}$ and every $a\in C_{c}^{\infty}\left(  T^{\ast}\mathbf{T}%
^{d}\right)  $,
\[
\int_{\mathbf{T}^{d}}a\left(  x,\xi\right)  \mu\left(  t,dx,d\xi\right)
=\int_{\mathbf{T}^{d}}\left\langle a\right\rangle \left(  x,\xi\right)
\mu_{0}\left(  dx,d\xi\right)  .
\]

\end{proposition}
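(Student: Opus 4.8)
The plan is to determine the limiting measure $\mu(t,\cdot)$, for almost every $t$, by disintegrating it with respect to the momentum variable and combining three ingredients: (a) for a.e.\ $t$ the measure $\mu(t,\cdot)$ is invariant under the geodesic flow $\phi_s(x,\xi)=(x+s\xi,\xi)$, which is part i) of Theorem~\ref{ThmInvEg}; (b) the momentum marginal of the Wigner distribution is \emph{exactly} conserved by the Schr\"odinger flow on $\mathbf{T}^d$; and (c) on $\mathbf{T}^d$ the linear flow $s\mapsto x+s\xi$ is uniquely ergodic, with normalized Haar measure as its only invariant probability measure, precisely when $\xi\notin\Omega$. Ingredient (c) also identifies the average: by Weyl equidistribution, $\langle a\rangle(x,\xi)$ does not depend on $x$ and equals $\bar a(\xi):=(2\pi)^{-d}\int_{\mathbf{T}^d}a(y,\xi)\,dy$ for every $\xi\notin\Omega$.

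First I would establish (b). For $b\in C_c^\infty(\mathbf{R}^d)$ the operator $\operatorname*{op}\nolimits_h(b(\xi))=b(hD)$ is a Fourier multiplier on $\mathbf{T}^d$, hence it commutes with the unitary propagator $e^{i\alpha_h h t\Delta/2}$, which is itself a Fourier multiplier; therefore $(\operatorname*{op}\nolimits_h(b)\,e^{i\alpha_h h t\Delta/2}u_h|e^{i\alpha_h h t\Delta/2}u_h)=(\operatorname*{op}\nolimits_h(b)u_h|u_h)$ for all $t$. Multiplying by $\varphi\in L^1(\mathbf{R})$, integrating in $t$, and letting $h\to0^+$ along the subsequence of Theorem~\ref{ThmE}, identities (\ref{conv}) and (\ref{convID}) give
\[
\int_{\mathbf{R}}\varphi(t)\int_{T^*\mathbf{T}^d}b(\xi)\,\mu(t,dx,d\xi)\,dt=\left(\int_{\mathbf{R}}\varphi(t)\,dt\right)\int_{T^*\mathbf{T}^d}b(\xi)\,\mu_0(dx,d\xi).
\]
As $\varphi\in L^1(\mathbf{R})$ is arbitrary and $b$ ranges over a countable dense subset of $C_c^\infty(\mathbf{R}^d)$, it follows that for a.e.\ $t$ the momentum marginals coincide, $\pi_*\mu(t,\cdot)=\pi_*\mu_0$, where $\pi(x,\xi)=\xi$. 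In particular $\mu(t,\cdot)(\mathbf{T}^d\times\Omega)=\mu_0(\mathbf{T}^d\times\Omega)=0$ for a.e.\ $t$.

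Next I would fix $t$ in the full-measure set on which both $\pi_*\mu(t,\cdot)=\pi_*\mu_0$ and the invariance (\ref{invariance}) hold, and disintegrate the finite measure $\mu(t,\cdot)$ over $\pi$, writing $\mu(t,dx,d\xi)=\nu^t_\xi(dx)\,(\pi_*\mu_0)(d\xi)$ with $\nu^t_\xi$ probability measures on $\mathbf{T}^d$ defined for $\pi_*\mu_0$-a.e.\ $\xi$. Since $\phi_s$ preserves each fiber of $\pi$ and $\mu(t,\cdot)$ is $\phi_s$-invariant, uniqueness of the disintegration gives, for each fixed $s\in\mathbf{R}$, that $(T_{s\xi})_*\nu^t_\xi=\nu^t_\xi$ for $\pi_*\mu_0$-a.e.\ $\xi$, where $T_v$ is translation by $v$ on $\mathbf{T}^d$; running $s$ over a countable dense set and invoking weak continuity in $s$, one obtains that for $\pi_*\mu_0$-a.e.\ $\xi$ the measure $\nu^t_\xi$ is invariant under the whole group $\{T_{s\xi}:s\in\mathbf{R}\}$. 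Because $\pi_*\mu_0$ gives no mass to $\Omega$, $\pi_*\mu_0$-a.e.\ $\xi$ lies outside $\Omega$, so by (c) the measure $\nu^t_\xi$ is forced to be normalized Haar measure on $\mathbf{T}^d$. Hence, for every $a\in C_c^\infty(T^*\mathbf{T}^d)$,
\[
\int_{T^*\mathbf{T}^d}a\,d\mu(t,\cdot)=\int_{\mathbf{R}^d}\bar a(\xi)\,(\pi_*\mu_0)(d\xi)=\int_{T^*\mathbf{T}^d}\bar a(\xi)\,\mu_0(dx,d\xi)=\int_{T^*\mathbf{T}^d}\langle a\rangle\,\mu_0(dx,d\xi),
\]
the last step using $\mu_0(\mathbf{T}^d\times\Omega)=0$ and $\langle a\rangle=\bar a(\xi)$ on $\{\xi\notin\Omega\}$. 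This is the asserted identity.

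I expect the main obstacle to be conceptual rather than computational: on $T^*\mathbf{T}^d$ the geodesic average $\langle a\rangle$ is genuinely discontinuous --- it jumps across every rational hyperplane $\{k\cdot\xi=0\}$, $k\in\mathbf{Z}^d\setminus\{0\}$, and on $\Omega$ it is an average over a proper subtorus --- so the smoothness-based, Zoll-type reduction of Lemma~\ref{Lemma av} is not directly available. The hypothesis $\mu_0(\mathbf{T}^d\times\Omega)=0$ is precisely what repairs this: it permits replacing $\langle a\rangle$, $\mu_0$-almost everywhere, by the smooth symbol $\bar a(\xi)$, after which the whole statement is controlled through the exactly conserved momentum marginal. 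What remains is routine measure theory --- the disintegration, and the passage from ``$\phi_s$-invariance for each fixed $s$'' to ``$\nu^t_\xi$-invariance for a.e.\ $\xi$ and all $s$''.
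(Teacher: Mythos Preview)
Your proof is correct and shares all the key ingredients with the paper's: the conservation of the momentum marginal (your (b), which the paper obtains by invoking Theorem~\ref{ThmInvEg}~ii) together with Remark~\ref{RmkComm}), the identification $\langle a\rangle(x,\xi)=\bar a(\xi)$ for $\xi\notin\Omega$, and the flow-invariance of $\mu(t,\cdot)$. The only difference is in how these are combined at the end. You disintegrate $\mu(t,\cdot)$ over $\xi$ and invoke unique ergodicity of the linear flow to force each conditional measure $\nu^t_\xi$ to be Haar on $\mathbf{T}^d$. The paper takes a shorter route: since $\mu(t,\cdot)$ is $\phi_s$-invariant, dominated convergence gives directly $\int a\,d\mu(t,\cdot)=\int\langle a\rangle\,d\mu(t,\cdot)$; then, because $\mu(t,\cdot)$ inherits from $\mu_0$ the property of giving no mass to $\mathbf{T}^d\times\Omega$, one replaces $\langle a\rangle$ by $\bar a(\xi)$ in this integral and uses the equality of momentum marginals to arrive at $\int\bar a\,d\mu_0=\int\langle a\rangle\,d\mu_0$. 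Your approach has the virtue of making the structure of $\mu(t,\cdot)$ fully explicit --- Haar in $x$ on each relevant fiber --- while the paper's avoids disintegration altogether and is a couple of lines shorter.
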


As we mentioned in the introduction, the results obtained so far reflect that
the correspondence principle holds. It turns out that this is no longer the
case if $\mu_{0}$ charges $\mathbf{T}^{d}\times\Omega$.

\begin{proposition}
\label{PropQuantTorus}Let $\xi_{0}\in\mathbf{Z}^{d}$, $\varrho\in L^{2}\left(
\mathbf{T}^{d}\right)  $ and $\alpha_{h}=1/h$. Then there exist sequences
$\left(  u_{h}\right)  $ and $\left(  v_{h}\right)  $ whose semiclassical
measure is:%
\begin{equation}
\mu_{0}\left(  x,\xi\right)  =\left\vert \varrho\left(  x\right)  \right\vert
^{2}dx\delta_{\xi_{0}}\left(  \xi\right)  , \label{mu0osc}%
\end{equation}
but such that the limiting semiclassical measure (in the sense of
(\ref{conv})) for $\left(  e^{it\Delta/2}u_{h}\right)  $ is:%
\begin{equation}
\mu_{\left(  u_{h}\right)  }\left(  t,x,\xi\right)  =\left\langle
|e^{it\Delta/2}\varrho|^{2}\right\rangle \left(  x\right)  dx\delta_{\xi^{0}%
}\left(  \xi\right)  , \label{mu1}%
\end{equation}
(with $\left\langle \cdot\right\rangle $ defined by (\ref{aver})), whereas
that of $\left(  e^{it\Delta/2}v_{h}\right)  $ is given by:
\begin{equation}
\mu_{\left(  v_{h}\right)  }\left(  t,x,\xi\right)  =\frac{1}{\left(
2\pi\right)  ^{d}}\left(  \int_{\mathbf{T}^{d}}\left\vert \varrho\left(
y\right)  \right\vert ^{2}dy\right)  dx\delta_{\xi^{0}}\left(  \xi\right)  .
\label{mu2}%
\end{equation}

\end{proposition}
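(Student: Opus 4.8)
The plan is to build both sequences as highly oscillating wave packets concentrating in frequency on the line $\mathbf{R}\xi_0$, which is the extreme resonant direction, so that the averaging result of Proposition \ref{Prop AvTorus} does not apply and one must instead track the Schr\"odinger flow directly. Concretely, write $\xi_0 = |\xi_0| e_0$ with $e_0$ a primitive lattice direction (the degenerate case $\xi_0 = 0$ being handled separately by the last Remark, or excluded), and set
\begin{equation}
u_h(x) := e^{i\xi_0\cdot x/h}\,\varrho(x),\qquad
v_h(x) := e^{i\xi_0\cdot x/h}\,e^{i\theta_h(x)}\varrho(x),
\end{equation}
where $\theta_h$ is a slowly varying phase to be chosen (for instance $\theta_h(x) = x_1/\sqrt h$ along a direction transverse to $e_0$, modified to be $2\pi\mathbf{Z}^d$-periodic) designed so that $v_h$ has the same semiclassical measure but decouples the low Fourier modes. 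First I would verify \eqref{h-osc} and compute the common semiclassical measure: since $\operatorname{op}_h(a)(e^{i\xi_0\cdot x/h}\varrho)$ is, up to $O(h)$, $e^{i\xi_0\cdot x/h}\operatorname{op}_h(a(\cdot,\cdot+\xi_0))\varrho$ and $\varrho$ itself is $h$-oscillation-free, one gets $\mu_0(x,\xi) = |\varrho(x)|^2\,dx\,\delta_{\xi_0}(\xi)$ for both, which is \eqref{mu0osc}. The extra phase $e^{i\theta_h}$ contributes only a shift by $\nabla\theta_h(x) = o(1)$ pointwise in $\xi$, hence does not change $\mu_0$, but it does move the Fourier support of $\varrho$ off the sublattice $\{k : k\cdot e_0 = 0\}$ by a divergent amount.

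The heart of the matter is the computation of the evolved Wigner distribution at time $\alpha_h h t = t$ (since $\alpha_h = 1/h$). Here I would expand $\varrho$ in its Fourier series $\varrho = \sum_{k\in\mathbf{Z}^d}\hat\varrho(k)e^{ik\cdot x}$, so that $u_h = \sum_k \hat\varrho(k) e^{i(k + \xi_0/h)\cdot x}$, and apply $e^{it\Delta/2}$, which multiplies the mode $e^{i\ell\cdot x}$ by $e^{-it|\ell|^2/2}$. With $\ell = k + \xi_0/h$ one has $|\ell|^2 = |k|^2 + 2(k\cdot\xi_0)/h + |\xi_0|^2/h^2$; the divergent term $|\xi_0|^2/h^2$ is a global phase that cancels in the Wigner bracket, and the term $2(k\cdot\xi_0)/h$ is, crucially, an \emph{integer} multiple of $2\pi$ divided by $h$ only when... — in fact it is $k\cdot\xi_0/h$ times $2$, and since $\xi_0\in\mathbf{Z}^d$ the phase $e^{-it(k\cdot\xi_0)/h}$ is genuinely $h$-dependent and does not simplify. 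The correct bookkeeping is: pair modes $k,k'$ with $k-k'$ fixed; the cross term carries a phase $e^{-it(|k|^2-|k'|^2)/2}\,e^{-it(k-k')\cdot\xi_0/h}$. Testing against $\operatorname{op}_h(a)$ with $a = a(x,\xi)$ localizes $k-k'$ to values where $(k-k')\cdot\xi_0 = 0$ in the limit (because otherwise the rapid phase $e^{-it(k-k')\cdot\xi_0/h}$ oscillates and, integrated against $\varphi\in L^1$, contributes nothing by Riemann–Lebesgue) — this is exactly the nonsquare-integrability that forces the time average. So for $u_h$ the surviving terms are those with $(k-k')\cdot\xi_0 = 0$, and summing them reconstructs precisely $\langle|e^{it\Delta/2}\varrho|^2\rangle$, where the average $\langle\cdot\rangle$ along the periodic flow on the relevant cosphere kills the remaining oscillatory phases $e^{-it(|k|^2-|k'|^2)/2}$ with $k\neq k'$; this gives \eqref{mu1}.

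For $v_h$, the inserted phase translates every Fourier mode $k\mapsto k + m_h$ with $|m_h|\to\infty$ along a transverse direction, so that the resonance condition $(k-k')\cdot\xi_0 = 0$ now forces $k = k'$ outright in the limit (no two distinct lattice translates can stay resonant), and only the diagonal $k=k'$ survives; summing $\sum_k|\hat\varrho(k)|^2 = (2\pi)^{-d}\int_{\mathbf{T}^d}|\varrho|^2$ yields the constant density in \eqref{mu2}. The main obstacle I anticipate is making the ``Riemann–Lebesgue kills off-resonant phases'' argument uniform and rigorous: one must justify interchanging the (infinite) Fourier sum with the $h\to 0$ limit and the time integration, which requires an $L^2$-type bound on the tail of $\varrho$'s Fourier series together with the fact that $\operatorname{op}_h(a)$ has a symbol compactly supported in $\xi$, so only finitely many frequency pairs $k,k'$ contribute for each fixed $h$ small enough. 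A convenient way to organize this is to first prove the statement for $\varrho$ a trigonometric polynomial, where everything is a finite sum and the phases can be tracked by hand, and then pass to general $\varrho\in L^2$ by density, using the $L^2$-boundedness of $\operatorname{op}_h(a)$ uniformly in $h$ and the continuity of both sides of \eqref{mu1}–\eqref{mu2} in $\varrho$. The construction of the periodic slowly-varying phase $\theta_h$ realizing the transverse frequency shift without destroying $h$-oscillation is a minor technical point that I would settle by an explicit choice adapted to the sublattice $e_0^\perp\cap\mathbf{Z}^d$.
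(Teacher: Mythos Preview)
Your construction and analysis of $(u_h)$ essentially matches the paper's: one sets $u_h(x)=\varrho(x)e^{i\xi_0\cdot x/h}$ (with $h$ running through values for which $\xi_0/h\in\mathbf{Z}^d$), expands in Fourier series, and observes that after integration against $\varphi(t)$ the cross terms with $(k-k')\cdot\xi_0\neq0$ vanish in the limit. One misstatement, however: the average $\langle\cdot\rangle$ in (\ref{mu1}) is the \emph{spatial} average along the closed geodesic direction $\xi_0$, and its role is precisely to select the Fourier modes $l=k-k'$ with $l\cdot\xi_0=0$. It does \emph{not} kill the time phases $e^{-it(|k|^2-|k'|^2)/2}$; those survive and are exactly the explicit $t$-dependence of $\mu_{(u_h)}(t,\cdot)$, which is the whole point of the proposition.

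For $(v_h)$ there is a genuine gap. Translating the Fourier support by $m_h$ does not change the difference $k-k'$, so the condition $(k-k')\cdot\xi_0=0$ is completely unaffected by the shift; your sentence ``the resonance condition \ldots\ now forces $k=k'$'' is not the mechanism at work. What the shift actually produces is an \emph{additional} rapid phase $e^{-it(k-k')\cdot m_h}$ in the Wigner pairing. For this to suppress every off-diagonal term $l=k-k'\neq0$ (in particular those with $l\cdot\xi_0=0$, which survived for $u_h$), one needs $|l\cdot m_h|\to\infty$ for \emph{every} nonzero $l\in\mathbf{Z}^d$. A shift along a fixed lattice direction transverse to $\xi_0$ fails this as soon as $d\geq3$: there are always nonzero $l\in\mathbf{Z}^d$ orthogonal to both $\xi_0$ and the chosen direction, and for those $l$ the corresponding cross terms persist. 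The paper's device is to take $m_{h_n}=\lambda_n k_n$ with $k_n\in\mathbf{Q}^d$ converging to some $\theta_0\in\mathbf{R}^d\setminus\Omega$ (so $l\cdot\theta_0\neq0$ for all nonzero $l$), and to choose $h_n=1/\lambda_n^2$ with $\lambda_n$ clearing the denominators of $k_n$ so that $\lambda_n k_n\in\mathbf{Z}^d$. Then $l\cdot m_{h_n}=\lambda_n(l\cdot k_n)$ with $l\cdot k_n\to l\cdot\theta_0\neq0$, giving the required divergence for every $l\neq0$. This rational-approximation-to-an-irrational-direction idea is what your proposal is missing.
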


We can extract two consequences of this result. First, that the measures
$\mu\left(  t,\cdot\right)  $ may have an explicit dependence on $t$, which is
related to the Schr\"{o}dinger flow and does not depend exclusively on the
classical dynamics. Second, that no formula exists in general relating
$\mu_{0}$ and $\mu$ in the case that $\mu_{0}$ charges the resonant set
$\mathbf{T}^{d}\times\Omega$. In fact, $\mu$ depends on the way in which
concentration of the sequence of initial data takes place on $\mathbf{T}%
^{d}\times\Omega$. A more detailed study requires the introduction of
two-microlocal objects describing such a concentration and will be presented
in \cite{MaciaTorus}.

\section{\label{SecSSS}Semiclassical measures}

In this section we shall recall the necessary notions of semiclassical
pseudodifferential calculus and semiclassical measures that will be needed in
the sequel. We shall closely follow the presentation in \cite{GeLei}. Unless
otherwise specified, we implicitly refer to \cite{GeLei} for complete proofs
of the results presented in this section.

The classical \emph{Weyl quantization rule} on $\mathbf{R}^{d}$ associates to
any function $a\in C_{c}^{\infty}\left(  \mathbf{R}^{d}\times\mathbf{R}%
^{d}\right)  $ and any $h>0$ an operator $\operatorname*{op}\nolimits_{h}%
\left(  a\right)  $ acting on $u\in C_{c}^{\infty}\left(  \mathbf{R}%
^{d}\right)  $ as:%
\[
\operatorname*{op}\nolimits_{h}\left(  a\right)  u\left(  x\right)
:=\int_{\mathbf{R}^{d}}\int_{\mathbf{R}^{d}}a\left(  \frac{x+y}{2}%
,h\xi\right)  u\left(  y\right)  e^{i\left(  x-y\right)  \cdot\xi}dy\frac
{d\xi}{\left(  2\pi\right)  ^{d}}.
\]
It turns out that, under suitable growth conditions on $a$, the operators
$\operatorname*{op}\nolimits_{h}\left(  a\right)  $ are uniformly bounded in
$L^{2}\left(  \mathbf{R}^{d}\right)  $ when $h$ ranges any compact set of the
positive reals.

In order to extend this rule to functions $a\in C^{\infty}\left(  T^{\ast
}M\right)  $ we shall do the following. Let $\kappa:U\subset\mathbf{R}%
^{d}\rightarrow V\subset M$ be a coordinate patch; assume that $a$ is
supported on $T^{\ast}M|_{V}$. Then define, for every $h>0$, an operator
$\operatorname*{op}\nolimits_{h}\left(  a\right)  $ by the formula:%
\[
\left(  \operatorname*{op}\nolimits_{h}\left(  a\right)  u\right)  \circ
\kappa:=\theta\operatorname*{op}\nolimits_{h}\left(  a\circ\tilde{\kappa
}\right)  \left(  \theta u\circ\kappa\right)  ,
\]
where $a\circ\tilde{\kappa}$ is the expression of $a$ in the coordinates
$\kappa$ and $\theta\in C_{c}^{\infty}\left(  V\right)  $ is identically equal
to one on the projection of $\operatorname*{supp}a$ on $M$. To deal with the
general case it suffices to decompose the function $a$ in compactly supported
components using a partition of unity.

In what follows, we shall assume that $a\in C_{c}^{\infty}\left(  T^{\ast
}M\right)  $. The operators $\operatorname*{op}\nolimits_{h}\left(  a\right)
$ are called \emph{semiclassical pseudodifferential operators of symbol }$a$.
The following facts are well known.

\begin{itemize}
\item[(A)] The operators $\operatorname*{op}\nolimits_{h}\left(  a\right)  $
are bounded in $L^{2}\left(  M\right)  $ with norm:%
\begin{equation}
\left\Vert \operatorname*{op}\nolimits_{h}\left(  a\right)  \right\Vert
_{\mathcal{L}\left(  L^{2}\left(  M\right)  \right)  }\leq C\left\Vert
a\right\Vert _{C^{d+1}\left(  T^{\ast}M\right)  }, \label{opaBound}%
\end{equation}
the constant $C>0$ being uniform in $h\in\left(  0,1\right]  $.

\item[(B)] The family $\operatorname*{op}\nolimits_{h}\left(  a\right)  $ of
operators is not completely determined by the function $a$ -- in fact, the
result may depend on the partition of unity, the coordinate patches and the
cut-off functions $\theta$ chosen. However the $L^{2}\left(  M\right)
$-operator norm of the difference of any two families defined from $a$ by
means of the above procedure tends to zero as $h\rightarrow0^{+}$.

\item[(C)] The Laplacian of $\left(  M,g\right)  $ may be expressed in terms
of semiclassical pseudodifferential operators. One easily checks that
\[
-h^{2}\Delta=\operatorname*{op}\nolimits_{h}\left(  p\right)
+ih\operatorname*{op}\nolimits_{h}\left(  r\right)  +h^{2}\operatorname*{op}%
\nolimits_{h}\left(  m\right)  ,
\]
where $m\in C^{\infty}\left(  M\right)  $ is a function of $x$ alone depending
only on the derivatives up to order two of the Riemannian metric $g$. In a
coordinate chart $\kappa$, the functions $p$, $r$ are given by:%
\[
\left(  p\circ\tilde{\kappa}\right)  \left(  x,\xi\right)  :=\sum_{i,j=1}%
^{d}g^{ij}\left(  x\right)  \xi^{i}\xi^{j},\quad\left(  r\circ\tilde{\kappa
}\right)  \left(  x,\xi\right)  :=\frac{1}{\rho\left(  x\right)  }\sum
_{i,j=1}^{d}g^{ij}\left(  x\right)  \partial_{x_{i}}\rho\left(  x\right)
\xi_{j},
\]
where $\rho:=\sqrt{\det g}$. Therefore, $p$ coincides with the squared
Riemannian norm $\left\Vert \xi\right\Vert _{x}^{2}$ and
\begin{equation}
r=\frac{1}{2}\left\{  p,\log\rho\right\}  , \label{defr}%
\end{equation}
where $\left\{  \cdot,\cdot\right\}  $ stands for the Poisson bracket induced
by the canonical symplectic structure on $T^{\ast}M$.
\end{itemize}

The Weyl quantization rule enjoys a powerful symbolic calculus (see
\cite{DimSjos, MartinezBook} for a thorough description). Some particular
cases are the following.

\begin{itemize}
\item[(D)] \emph{Commutators.} For every $a\in C_{c}^{\infty}\left(  T^{\ast
}M\right)  $ and $h>0$ there exists an operator $s_{h}\in\mathcal{L}\left(
L^{2}\left(  M\right)  \right)  $ such that:%
\begin{equation}
\left[  \operatorname*{op}\nolimits_{h}\left(  a\right)  ,-h^{2}\Delta\right]
=\frac{h}{i}\operatorname*{op}\nolimits_{h}\left(  \left\{  a,p\right\}
\right)  +h^{2}\operatorname*{op}\nolimits_{h}\left(  \left\{  a,r\right\}
\right)  +s_{h}, \label{CS}%
\end{equation}
and $\left\Vert s_{h}\right\Vert _{\mathcal{L}\left(  L^{2}\left(  M\right)
\right)  }\leq Ch^{3}$.

\item[(E)] \emph{Adjoints.} If $a\in C_{c}^{\infty}\left(  T^{\ast}M\right)  $
is real then $\operatorname*{op}_{h}\left(  a\right)  $ is self-adjoint in
$L^{2}\left(  M\right)  $.
\end{itemize}

Finally, we shall assume that our manifold $\left(  M,g\right)  $ possesses a
semiclassical functional calculus. More precisely, that the following holds:

\begin{itemize}
\item[(F)] \emph{Functional calculus.} For every $\sigma\in C_{c}^{\infty
}\left(  \mathbf{R}\right)  $ the following holds:
\begin{equation}
\sigma\left(  -h^{2}\Delta\right)  =\operatorname*{op}\nolimits_{h}\left(
\sigma\circ p\right)  +z_{h}, \label{CF}%
\end{equation}
with $\left\Vert z_{h}\right\Vert _{\mathcal{L}\left(  L^{2}\left(  M\right)
\right)  }\leq Ch$.
\end{itemize}

This is known to hold when $M$ is compact (see \cite{BGT04}), and has been
proved for Euclidean spaces in \cite{Robert} and recently for manifolds with
ends in \cite{BoucFC}.

Given a function $u\in L^{2}\left(  M\right)  $ we define its \emph{Wigner
distribution} $w_{u}^{h}\in\mathcal{D}^{\prime}\left(  T^{\ast}M\right)  $
acting on test functions $a\in C_{c}^{\infty}\left(  T^{\ast}M\right)  $ as:%
\[
\left\langle w_{u}^{h},a\right\rangle :=\left(  \operatorname*{op}%
\nolimits_{h}\left(  a\right)  u|u\right)  .
\]
Property (E) of the Weyl quantization ensures that $w_{u}^{h}$ is real.
Moreover, the following result holds.

\begin{proposition}
\label{PropSCM}Let $\left(  u_{h}\right)  $ be a bounded sequence in
$L^{2}\left(  M\right)  $. Then for some subsequence (which we do not relabel)
the Wigner distributions $w_{u_{h}}^{h}$ converge to a finite, positive Radon
measure $\mu\in\mathcal{M}_{+}\left(  T^{\ast}M\right)  $:%
\begin{equation}
\lim_{h\rightarrow0^{+}}\left\langle w_{u_{h}}^{h},a\right\rangle
=\int_{T^{\ast}M}a\left(  x,\xi\right)  \mu_{0}\left(  dx,d\xi\right)
,\qquad\text{for all }a\in C_{c}^{\infty}\left(  T^{\ast}M\right)  \text{.}
\label{Limit SCM}%
\end{equation}

\end{proposition}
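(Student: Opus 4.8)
The plan is to establish Proposition \ref{PropSCM} by the standard positivity-plus-boundedness argument for Wigner distributions, adapting the Euclidean proof (as in \cite{GeLei}) to the manifold setting using only properties (A)--(F). First I would observe that, by property (A), for each fixed $a\in C_{c}^{\infty}\left(  T^{\ast}M\right)  $ one has $\left\vert \left\langle w_{u_{h}}^{h},a\right\rangle \right\vert =\left\vert \left(  \operatorname*{op}\nolimits_{h}\left(  a\right)  u_{h}|u_{h}\right)  \right\vert \leq C\left\Vert a\right\Vert _{C^{d+1}\left(  T^{\ast}M\right)  }\sup_{h}\left\Vert u_{h}\right\Vert _{L^{2}\left(  M\right)  }^{2}$, so the sequence $\left(  w_{u_{h}}^{h}\right)  $ is bounded in the dual of the Banach space $\left(  C_{c}^{\infty}\left(  T^{\ast}M\right)  ,\left\Vert \cdot\right\Vert _{C^{d+1}}\right)  $. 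Since $C_{c}^{\infty}\left(  T^{\ast}M\right)  $ is separable in the $C^{d+1}$-topology (a countable dense subset can be produced via a countable locally finite atlas and partitions of unity), a diagonal extraction yields a subsequence along which $\left\langle w_{u_{h}}^{h},a\right\rangle $ converges for every $a$ in that dense subset, hence, by the uniform bound, for every $a\in C_{c}^{\infty}\left(  T^{\ast}M\right)  $. Call the limit $\ell\left(  a\right)  $; it is a linear functional satisfying $\left\vert \ell\left(  a\right)  \right\vert \leq C\left\Vert a\right\Vert _{C^{d+1}}$.

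The next and central step is to show $\ell$ extends to a \emph{positive} Radon measure. The key is the sharp G\aa rding inequality for the Weyl calculus: if $a\geq0$ then $\operatorname*{op}\nolimits_{h}\left(  a\right)  \geq-Ch\,\mathrm{Id}$ in $\mathcal{L}\left(  L^{2}\left(  M\right)  \right)  $, with $C$ depending on finitely many derivatives of $a$. Actually, since Weyl quantization of a nonnegative symbol need not be positive but only positive up to $O(h)$, I would instead argue as follows: for $a\geq0$ write $a=b^{2}$ with $b=\sqrt{a}\in C_{c}^{\infty}$ (after a small regularization, replacing $a$ by $a+\varepsilon\chi$ with $\chi\in C_{c}^{\infty}$, $\chi\equiv1$ near $\operatorname*{supp}a$), and use the composition formula $\operatorname*{op}\nolimits_{h}\left(  b\right)  ^{\ast}\operatorname*{op}\nolimits_{h}\left(  b\right)  =\operatorname*{op}\nolimits_{h}\left(  a\right)  +O(h)$ in $L^{2}$-operator norm (a consequence of the symbolic calculus, in the same spirit as (D) and (E)), valid in local charts and globalized by property (B). Then $\left(  \operatorname*{op}\nolimits_{h}\left(  a\right)  u_{h}|u_{h}\right)  =\left\Vert \operatorname*{op}\nolimits_{h}\left(  b\right)  u_{h}\right\Vert _{L^{2}}^{2}+O(h)\geq O(h)$, so passing to the limit gives $\ell\left(  a\right)  \geq0$. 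Letting $\varepsilon\to0$ removes the regularization. Thus $\ell$ is a positive linear functional on $C_{c}^{\infty}\left(  T^{\ast}M\right)  $, and by density and the bound $\left\vert \ell\left(  a\right)  \right\vert \leq C\left\Vert a\right\Vert _{\infty}$ for $a\geq0$ it extends to a positive functional on $C_{c}\left(  T^{\ast}M\right)  $; the Riesz representation theorem produces $\mu_{0}\in\mathcal{M}_{+}\left(  T^{\ast}M\right)  $ with $\ell\left(  a\right)  =\int_{T^{\ast}M}a\,d\mu_{0}$, which is (\ref{Limit SCM}).

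Finally, I would check that $\mu_{0}$ is \emph{finite}, with total mass $\leq\sup_{h}\left\Vert u_{h}\right\Vert _{L^{2}}^{2}$. For this, given a compact $K\subset T^{\ast}M$, pick $a\in C_{c}^{\infty}$ with $0\leq a\leq1$ and $a\equiv1$ on $K$; property (F) (applied to cut off in $\xi$) together with the fact that $\operatorname*{op}\nolimits_{h}\left(  a\right)  $ is, up to $O(h)$, a contraction when $0\leq a\leq1$ (again via $\operatorname*{op}\nolimits_{h}\left(  1-a\right)  \geq O(h)$ and $\operatorname*{op}\nolimits_{h}\left(  a\right)  +\operatorname*{op}\nolimits_{h}\left(  1-a\right)  =\operatorname*{op}\nolimits_{h}\left(  \chi\right)  $ with $\chi\equiv1$ on a neighbourhood) gives $\mu_{0}(K)\leq\ell\left(  a\right)  \leq\limsup_{h}\left(  u_{h}|u_{h}\right)  \leq\sup_{h}\left\Vert u_{h}\right\Vert ^{2}$. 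Taking the supremum over $K$ gives finiteness. The main obstacle is the positivity step: Weyl quantization of a nonnegative symbol is only nonnegative modulo $O(h)$, so one must route the argument through the factorization $a=b^2$ and the symbolic composition law, being careful that the $O(h)$ remainder in the globalized calculus (property (B)) does not spoil the limit --- but since all error terms are $O(h)\to0$, this is harmless.
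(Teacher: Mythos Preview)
Your argument is the standard one and is essentially correct; note, however, that the paper does not actually give a proof of Proposition~\ref{PropSCM} --- it defers to \cite{GeLei} (see the sentence at the beginning of Section~\ref{SecSSS}: ``Unless otherwise specified, we implicitly refer to \cite{GeLei} for complete proofs of the results presented in this section''). So there is no ``paper's proof'' to compare against, and your outline is precisely the route taken in \cite{GeLei} and in most treatments of semiclassical measures.

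One small technical point in your positivity step: the regularization you describe, replacing $a$ by $a+\varepsilon\chi$ and then taking $b=\sqrt{a+\varepsilon\chi}$, does not quite produce a smooth $b$, since $a+\varepsilon\chi$ vanishes on the boundary of $\operatorname*{supp}\chi$ and the square root is not smooth there. The clean fix is to set $b:=\chi\sqrt{a+\varepsilon}$ with $\chi\in C_{c}^{\infty}$, $\chi\equiv1$ on $\operatorname*{supp}a$; since $a\geq0$ implies $a+\varepsilon\geq\varepsilon>0$ everywhere, $\sqrt{a+\varepsilon}$ is globally smooth, and $b^{2}=\chi^{2}(a+\varepsilon)=a+\varepsilon\chi^{2}$. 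Then $\ell(a)+\varepsilon\ell(\chi^{2})=\ell(b^{2})\geq0$ and you let $\varepsilon\to0$. With this adjustment your proof goes through.
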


Note that property (B) of $\operatorname*{op}\nolimits_{h}\left(
\cdot\right)  $ ensures that the limit $\mu_{0}$ does not depend on the
partitions of unity, coordinate charts, and cut-off functions used to define
$\operatorname*{op}\nolimits_{h}\left(  a\right)  $.

Whenever (\ref{Limit SCM}) holds, we say that $\mu_{0}$ is the
\emph{semiclassical measure }of the sequence $\left(  u_{h}\right)  $. If in
addition the sequence satisfies the $h$-oscillation property (\ref{h-osc})
then $\left\vert u_{h}\right\vert ^{2}dm$ tends to the projection on $M$ of
$\mu_{0}$ as $h\rightarrow0^{+}$.

\begin{proposition}
\label{PropTight}Let $\mu_{0}$ be the semiclassical measure of an
$h$-oscillating sequence $\left(  u_{h}\right)  $. Suppose that
\[
\left\vert u_{h}\right\vert ^{2}dm\rightharpoonup\nu\qquad\text{vaguely in
}\mathcal{M}_{+}\left(  M\right)  \text{ as }h\rightarrow0^{+}\text{.}%
\]
Then
\[
\int_{T_{x}^{\ast}M}\mu_{0}\left(  x,d\xi\right)  =\nu\left(  x\right)  .
\]

\end{proposition}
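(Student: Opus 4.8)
The plan is to establish Proposition \ref{PropTight} by testing the convergence \eqref{Limit SCM} against symbols $a$ that depend only on the base point $x\in M$, and then using the $h$-oscillation property to control the contribution of high frequencies. First I would observe that for $a=\chi\in C_c^\infty(M)$, viewed as a symbol on $T^*M$ that is constant in $\xi$, the operator $\operatorname*{op}_h(\chi)$ is simply multiplication by $\chi$ (or differs from it by a term going to zero in $\mathcal{L}(L^2(M))$, by the symbolic calculus), so that $\langle w_{u_h}^h,\chi\rangle = \int_M \chi\,|u_h|^2\,dm + o(1)$. Passing to the limit on the left via \eqref{Limit SCM} and on the right via the hypothesis $|u_h|^2\,dm\rightharpoonup\nu$ gives $\int_M\chi\,d\nu = \int_{T^*M}\chi(x)\,\mu_0(dx,d\xi)=\int_M\chi(x)\big(\int_{T^*_xM}\mu_0(x,d\xi)\big)$ for every $\chi\in C_c^\infty(M)$, which is the desired identity $\int_{T^*_xM}\mu_0(x,d\xi)=\nu(x)$ as measures on $M$ --- \emph{provided} the $\xi$-marginal of $\mu_0$ is a finite (Radon) measure, i.e. provided there is no escape of mass to $\xi=\infty$.

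The heart of the argument, and the place where $h$-oscillation enters, is precisely ruling out this escape of mass. The point is that a symbol constant in $\xi$ is not compactly supported on $T^*M$, so one cannot directly apply \eqref{Limit SCM}; instead one approximates $\chi(x)$ by $\chi(x)\,\sigma(\|\xi\|_x^2/R)$ with $\sigma\in C_c^\infty(\mathbf{R})$, $\sigma\equiv1$ near $0$, and lets $R\to\infty$. Here I would use the functional calculus property (F): $\operatorname*{op}_h(\sigma\circ p) = \sigma(-h^2\Delta) + O(h)$, together with the composition rule to handle the extra cut-off $\chi$. Thus $\operatorname*{op}_h(\chi\,(\sigma\circ(p/R)))$ is, up to $o(1)$ errors, $\chi\,\sigma(-h^2\Delta/R)$, and the $h$-oscillation hypothesis \eqref{h-osc} tells us exactly that $\|(1-\sigma(-h^2\Delta/R))\chi u_h\|_{L^2}$ is small uniformly in $h$ once $R$ is large (more precisely, \eqref{h-osc} controls $\mathbf{1}_{(-\infty,R)}(h^2\Delta)$, and since $-h^2\Delta\ge0$ the relevant tail is the region $h^2(-\Delta)\ge R$, which \eqref{h-osc} drives to zero). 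Consequently $\big|\langle w_{u_h}^h,\chi(\sigma\circ(p/R))\rangle - \int_M\chi|u_h|^2\,dm\big|\le \varepsilon(R)+o_h(1)$ with $\varepsilon(R)\to0$.

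Combining the two estimates: for each fixed $R$, the left-hand side converges as $h\to0^+$ to $\int_{T^*M}\chi(x)\,\sigma(\|\xi\|_x^2/R)\,\mu_0(dx,d\xi)$ by \eqref{Limit SCM}; the right-hand side converges to $\int_M\chi\,d\nu$. Letting $R\to\infty$, monotone convergence (for $\chi\ge0$) shows that $\int_{T^*M}\chi(x)\,\mu_0(dx,d\xi)$ is finite and equals $\int_M\chi\,d\nu$; this simultaneously proves that the $\xi$-marginal of $\mu_0$ is a finite Radon measure and identifies it with $\nu$. Splitting a general $\chi\in C_c^\infty(M)$ into positive and negative parts removes the sign restriction. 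The main obstacle is the bookkeeping in the second paragraph: making rigorous that $\operatorname*{op}_h$ of a symbol that is a product of a base cut-off and a function of $p/R$ is genuinely close, in $\mathcal{L}(L^2)$ and uniformly in $h$ and $R$, to the corresponding product of $\chi$ with a spectral cut-off of $-h^2\Delta$, so that \eqref{h-osc} can be invoked with constants that do not degenerate as $R\to\infty$. This is where properties (A)--(F) of Section \ref{SecSSS} are used, and one must be slightly careful that the $O(h)$ and $O(h^3)$ remainders in (F) and (D) are uniform in the scaling parameter $R$ (which they are, since rescaling $\sigma$ only costs powers of $1/R$, not of $1/h$). \rule{0.5em}{0.5em}
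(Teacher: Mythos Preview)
Your proposal is correct and follows essentially the same approach as the paper: approximate the $x$-only symbol by a compactly supported one via a $\xi$-cutoff, use the functional calculus (property (F)) to translate the $\xi$-cutoff into a spectral cutoff $\sigma(-h^{2}\Delta/R)$, and then invoke the $h$-oscillation hypothesis \eqref{h-osc} to control the high-frequency remainder uniformly in $h$ before letting $R\to\infty$. The only cosmetic difference is that the paper passes to local coordinates and phrases the tail condition in terms of the Euclidean Fourier transform of $\theta u_{h}\circ\kappa$ (thereby reducing to Proposition~1.6 of \cite{GeLei}), whereas you stay intrinsic on $M$ and work directly with spectral projectors of $\Delta$; both routes hinge on (F) in exactly the same way.
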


\begin{proof}
The proof of this result combines that of Proposition 1.6 in \cite{GeLei} with
the functional calculus formula (\ref{CF}). Working in coordinates
$\kappa:U\subset\mathbf{R}^{d}\rightarrow V\subset M$ and following exactly
the reasoning in \cite{GeLei}, Proposition 1.6, we deduce that the conclusion
holds provided%
\[
\limsup_{h\rightarrow0^{+}}\int_{\left\vert \xi\right\vert >R/h}\left\vert
\widehat{\theta u_{h}\circ\kappa}\left(  \xi\right)  \right\vert ^{2}%
d\xi\rightarrow0,\text{ as }R\rightarrow\infty,
\]
for any $\theta\in C_{c}^{\infty}\left(  V\right)  $. Using the functional
calculus formula (\ref{CF}) we deduce that this condition is satisfied
whenever (\ref{h-osc}) holds.
\end{proof}

We conclude this review of semiclassical measures examining a specific
computation of the semiclassical measure of a wave-packet. Let $\left(
x_{0},\xi_{0}\right)  \in T^{\ast}M$ and $\left(  U,\kappa\right)  $ a
coordinate system centered at $x_{0}$ (\emph{i.e. }$0\in U$ and $\kappa\left(
0\right)  =x_{0}$). Let $\varrho\in C_{c}^{\infty}\left(  \mathbf{R}%
^{d}\right)  $ be supported in $U$ and identically equal to one near the
origin and let $\varphi\in C^{\infty}\left(  M\right)  $ be a function such
that for $x\in U$,%
\[
\varphi\left(  \kappa\left(  x\right)  \right)  =\kappa^{\ast}\left(  \xi
_{0}\right)  \cdot x+i\left\vert x\right\vert ^{2},
\]
where $\kappa^{\ast}\left(  \xi_{0}\right)  $ stands for the pull-back by
$\kappa$ of the covector $\xi_{0}\in T_{x_{0}}^{\ast}M$. Define $\rho_{h}\in
C_{c}^{\infty}\left(  \kappa\left(  U\right)  \right)  $ as $\rho_{h}\left(
\kappa\left(  x\right)  \right)  :=\varrho\left(  x/h^{1/2}\right)  $ and
$v_{h}\in L^{2}\left(  M\right)  $ as%
\[
v_{h}\left(  x\right)  :=Ch^{-d/4}\rho_{h}\left(  x\right)  e^{i\varphi\left(
x\right)  /h},
\]
where $C>0$ is chosen to have $\left\Vert v_{h}\right\Vert _{L^{2}\left(
M\right)  }=1$.

The sequence $\left(  v_{h}\right)  $ is called a \emph{wave-packet (or a
coherent state) centered at }$\left(  x_{0},\xi_{0}\right)  $. A simple
computation shows the following.

\begin{proposition}
\label{PropCS}The sequence $\left(  v_{h}\right)  $ is $h$-oscillatory and has
a semiclassical measure $\mu_{0}=\delta_{\left(  x_{0},\xi_{0}\right)  }$.
\end{proposition}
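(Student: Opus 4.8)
The plan is to verify the two claims of Proposition \ref{PropCS} directly in the coordinate chart $\kappa$ centered at $x_{0}$, reducing everything to a standard computation in $\mathbf{R}^{d}$. First I would note that, because $\rho_{h}$ is supported in a fixed coordinate patch, the action of $\operatorname*{op}\nolimits_{h}(a)$ on $v_{h}$ depends (up to an $o(1)$ error in $L^{2}$-norm, by property (B)) only on the local symbol $a\circ\tilde{\kappa}$, so it suffices to compute $(\operatorname*{op}\nolimits_{h}(a\circ\tilde{\kappa})w_{h}|w_{h})$ where $w_{h}(x):=Ch^{-d/4}\varrho(x/h^{1/2})e^{i(\xi_{0}\cdot x+i|x|^{2})/h}$ (writing $\xi_{0}$ for $\kappa^{\ast}(\xi_{0})$ and using that the Riemannian density in these coordinates is $\rho(x)\,dx$ with $\rho(0)=1$, which only affects normalization constants and lower-order terms).

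The core computation is the classical one for a Gaussian-type coherent state: one rescales $x=x_{0}+h^{1/2}y$ in the integral defining $w_{h}$ and expands the symbol $a(\tfrac{x+x'}{2},h\xi)$ around $(x_{0},\xi_{0})$. The phase $e^{i(\xi_{0}\cdot x)/h}$ combined with the $h\xi$ in the symbol localizes the momentum variable at $\xi_{0}$, while the cutoff $\varrho(x/h^{1/2})$ and the Gaussian factor $e^{-|x-x_{0}|^{2}/h}$ localize the position variable at $x_{0}$ on the natural scale $h^{1/2}$; since $\varrho\equiv 1$ near the origin, the cutoff is transparent on that scale and contributes only negligible tails. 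Carrying the leading term through and using $\|v_{h}\|_{L^{2}}=1$ gives $(\operatorname*{op}\nolimits_{h}(a)v_{h}|v_{h})\to a(x_{0},\xi_{0})$, i.e. $w_{v_{h}}^{h}\to\delta_{(x_{0},\xi_{0})}$, which is the second assertion. The main technical point — though still routine — is controlling the error terms uniformly in $h$: the Taylor remainder of $a$ produces factors of $h^{1/2}$ (from $x-x_{0}$) and $h$ (from $h\xi-\xi_{0}$... here one must be slightly careful, since $\xi$ itself is $O(1/h)$ only in the tails, which are suppressed by the Gaussian), all of which vanish, and one invokes the uniform $L^{2}$-boundedness (A) to absorb the remaining contributions.

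For the $h$-oscillation property (\ref{h-osc}), I would argue that $\sigma(-h^{2}\Delta)v_{h}$ is $o(1)$ in $L^{2}$ whenever $\sigma\in C_{c}^{\infty}(\mathbf{R})$ is supported away from $\|\xi_{0}\|_{x_{0}}^{2}$: indeed, by the functional calculus (\ref{CF}), $\sigma(-h^{2}\Delta)=\operatorname*{op}\nolimits_{h}(\sigma\circ p)+O(h)$, and $\sigma\circ p$ vanishes near $(x_{0},\xi_{0})$, so the same rescaling argument shows $\|\operatorname*{op}\nolimits_{h}(\sigma\circ p)v_{h}\|_{L^{2}}\to 0$. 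Taking $\sigma$ equal to one on a neighborhood of $0$ but vanishing near $\|\xi_{0}\|^{2}_{x_{0}}$ shows that $\mathbf{1}_{(-\infty,R)}(h^{2}\Delta)\phi v_{h}$ — whose Fourier support lies in the region where such a $\sigma$ is identically one, for $R$ sufficiently negative — has $L^{2}$-norm tending to zero (more precisely one compares $\mathbf{1}_{(-\infty,R)}(h^{2}\Delta)$ with a smooth $\sigma$ supported in $(-\infty,R')$ for $R<R'<0$, using that $p\geq 0$). Since $\xi_{0}\neq 0$ is irrelevant here and the chart structure is already handled, no further obstacle arises; the only thing to be slightly careful about is that $\phi v_{h}$ and $v_{h}$ differ only by multiplication by a smooth function equal to one near $x_{0}$, so $\phi v_{h}=v_{h}+o(1)$ in $L^{2}$ and the estimate transfers.
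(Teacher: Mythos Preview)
The paper offers no proof beyond the sentence ``A simple computation shows the following,'' and your sketch is exactly that computation: pull back to the chart, rescale $x\mapsto h^{1/2}y$, and read off the limit $\delta_{(x_{0},\xi_{0})}$ from the Gaussian localization, then deduce $h$-oscillation via the functional calculus (F). One small point to tighten in the $h$-oscillation step: since $\mathbf{1}_{(-\infty,R)}$ is not compactly supported you cannot apply (F) to it directly; the clean route is to take $\sigma\in C_{c}^{\infty}(\mathbf{R})$ with $0\le\sigma\le 1$, $\sigma\equiv 1$ near $\|\xi_{0}\|_{x_{0}}^{2}$, supported in $(0,|R|)$, and use the operator inequality $\mathbf{1}_{(-\infty,R)}(h^{2}\Delta)\le I-\sigma(-h^{2}\Delta)$ together with $(\sigma(-h^{2}\Delta)\phi v_{h}\,|\,\phi v_{h})\to|\phi(x_{0})|^{2}=\lim\|\phi v_{h}\|^{2}$.
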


Using an orthogonality property of semiclassical measures (see \cite{Ge91c},
Proposition 3.3) and the preceding result one sees that every linear
combination of delta measures in $T^{\ast}M$ can be realized as the
semiclassical measure of some sequence in $L^{2}\left(  M\right)  $. Since
these combinations of point masses are dense in $\mathcal{M}_{+}\left(
T^{\ast}M\right)  $, by the Krein-Milman theorem, we conclude that \emph{every
finite, positive Radon measure on }$T^{\ast}M$ \emph{can be realized as the
semiclassical measure for some sequence in }$L^{2}\left(  M\right)  $.

\section{\label{SecZ}Proof of Theorems \ref{ThmE} and \ref{ThmInvEg}}

\begin{proof}
[\textbf{Proof of Theorem \ref{ThmE}}]Let $\psi_{h}\left(  t,x\right)
:=e^{i\alpha_{h}ht\Delta/2}u_{h}\left(  x\right)  $ and consider the
corresponding sequence of time-space Wigner distributions $W_{h}\in
\mathcal{D}^{\prime}\left(  T^{\ast}\left(  \mathbf{R}\times M\right)
\right)  $ defined by%
\[
\left\langle W_{h},b\right\rangle :=\left(  \operatorname*{op}\nolimits_{h}%
\left(  b_{h}\right)  \psi_{h}|\psi_{h}\right)  _{L^{2}\left(  \mathbf{R}%
\times M\right)  },
\]
where, for $b\in C_{c}^{\infty}\left(  T^{\ast}\left(  \mathbf{R}\times
M\right)  \right)  $ we have written $b_{h}\left(  t,x,\tau,\xi\right)
:=b\left(  t,x,\tau/\alpha_{h},\xi\right)  $. It is easy to check that
sequence $\left(  W_{h}\right)  $ is bounded in $\mathcal{D}^{\prime}\left(
T^{\ast}\left(  \mathbf{R}\times M\right)  \right)  $, therefore it is
possible to extract a subsequence (which we shall not relabel) such that%
\[
\lim_{h\rightarrow0^{+}}\left\langle W_{h},b\right\rangle =\int_{T^{\ast
}\left(  \mathbf{R}\times M\right)  }b\left(  t,x,\tau,\xi\right)  d\tilde
{\mu}\left(  t,x,\tau,\xi\right)  .
\]
It turns out (see \cite{Bu97a, GeLei}) that the limit $\tilde{\mu}$ is a
positive Radon measure on $T^{\ast}\left(  \mathbf{R}\times M\right)  $. Let
$\varphi,\chi\in C_{c}^{\infty}\left(  \mathbf{R}\right)  $, with $0\leq
\chi\leq1$ and $\chi|_{\left(  -1,1\right)  }\equiv1$. For every $a\in
C_{c}^{\infty}\left(  T^{\ast}M\right)  $ we can write:%
\begin{equation}
\int_{\mathbf{R}}\varphi\left(  t\right)  \left(  \operatorname*{op}%
\nolimits_{h}\left(  a\right)  \psi_{h}|\psi_{h}\right)  dt=\left(
\operatorname*{op}\nolimits_{h}\left(  b_{h}^{R}\right)  \psi_{h}|\psi
_{h}\right)  _{L^{2}\left(  \mathbf{R}\times M\right)  }+r\left(  R,h\right)
, \label{Fl}%
\end{equation}
where $b_{h}^{R}\left(  t,x,\tau,\xi\right)  :=\varphi\left(  t\right)
\chi\left(  \tau/\alpha_{h}R\right)  a\left(  x,\xi\right)  $ and the
remainder $r$ is defined as follows. Set $\sigma_{R}\left(  \tau\right)
:=\sqrt{1-\chi\left(  \tau/R\right)  }$; standard arguments of semiclassical
pseudodifferential calculus give%
\[
r\left(  R,h\right)  =\int_{\mathbf{R}}\varphi\left(  t\right)
(\operatorname*{op}\nolimits_{h}\left(  a\right)  \sigma_{R}\left(  \frac
{h}{\alpha_{h}}D_{t}\right)  \psi_{h}|\sigma_{R}\left(  \frac{h}{\alpha_{h}%
}D_{t}\right)  \psi_{h})_{L^{2}\left(  \mathbf{R}\times M\right)
}dt+\mathcal{O}\left(  h^{2}\right)  .
\]
We have used the notation $\sigma_{R}\left(  \frac{h}{\alpha_{h}}D_{t}\right)
$ to denote the operator $\operatorname*{op}_{\frac{h}{\alpha_{h}}}\left(
\sigma_{R}\right)  $ acting on functions defined on $\mathbf{R}_{t}$. Clearly,
$\sigma_{R}\left(  \frac{h}{\alpha_{h}}D_{t}\right)  \psi_{h}=\sigma
_{R}\left(  h^{2}\Delta/2\right)  \psi_{h}$; therefore,%
\[
\left\vert r\left(  R,h\right)  \right\vert \leq C_{a,\varphi}\left\Vert
\sigma_{R}\left(  h^{2}\Delta/2\right)  \psi_{h}\right\Vert _{L^{2}\left(
M\right)  }^{2}+\mathcal{O}\left(  h^{2}\right)  ,
\]
and (\ref{h-osc}) ensures that $\limsup_{h\rightarrow0^{+}}r\left(
R,h\right)  $ tends to $0$ as $R\rightarrow\infty$. Taking limits in
(\ref{Fl}), first in $h\rightarrow0^{+}$ then $R\rightarrow\infty$, we
conclude:%
\begin{equation}
\lim_{h\rightarrow0^{+}}\int_{\mathbf{R}}\varphi\left(  t\right)  \left(
\operatorname*{op}\nolimits_{h}\left(  a\right)  \psi_{h}|\psi_{h}\right)
dt=\int_{T^{\ast}\left(  \mathbf{R}\times M\right)  }\varphi\left(  t\right)
a\left(  x,\xi\right)  d\tilde{\mu}\left(  t,x,\tau,\xi\right)  .
\label{limtilda}%
\end{equation}
Note that, because of the bound:%
\[
\sup_{t\in\mathbf{R}}\left\vert (\operatorname*{op}\nolimits_{h}\left(
a\right)  e^{i\alpha_{h}ht\Delta/2}u_{h}|e^{i\alpha_{h}ht\Delta/2}%
u_{h})\right\vert \leq C\left\Vert a\right\Vert _{C^{d+1}\left(  T^{\ast
}M\right)  }\left\Vert u_{h}\right\Vert _{L^{2}\left(  M\right)  }^{2},
\]
convergence in (\ref{limtilda}) actually takes place for any $\varphi\in
L^{1}\left(  \mathbf{R}\right)  $ and the limit is in $L^{\infty}\left(
\mathbf{R};\mathcal{M}_{+}\left(  T^{\ast}M\right)  \right)  $. Therefore, the
measure $\mu\left(  t,x,\xi\right)  :=\int_{\mathbf{R}}\tilde{\mu}\left(
t,x,d\tau,\xi\right)  $ fulfills the requirements of i).

We now prove ii). First remark that we cannot directly derive (\ref{den}) from
part i), since test functions depending only on $x$ are not compactly
supported in $T^{\ast}M$. We start noticing that $\left\vert e^{i\alpha
_{h}ht\Delta/2}u_{h}\right\vert ^{2}$ is bounded in $L^{\infty}\left(
\mathbf{R};L^{1}\left(  M\right)  \right)  $; this ensures existence of the
limit in (\ref{den}), eventually for a subsequence. In order to identify the
limit it is better to work locally in a coordinate patch $\kappa
:U\subset\mathbf{R}^{d}\rightarrow V\subset M$. From the functional calculus
identity (\ref{CF}) and the $h$-oscillation hypothesis (\ref{h-osc}) one
deduces that, for any $\theta\in C_{c}^{\infty}\left(  V\right)  $ and
$\varphi\in L^{1}\left(  \mathbf{R}\right)  $, the sequence $\left(
\theta\psi_{h}\circ\kappa\right)  $ enjoys the (euclidean) $h$-oscillation
property:%
\begin{equation}
\limsup_{h\rightarrow0^{+}}\int_{\mathbf{R}}\int_{\left\vert \xi\right\vert
>R/h}\varphi\left(  t\right)  |\widehat{\theta\psi_{h}\circ\kappa}\left(
t,\xi\right)  |^{2}d\xi dt\rightarrow0,\qquad\text{as }R\rightarrow
\infty.\label{hoscE}%
\end{equation}
From this it is easy to conclude (\ref{den}) following the lines of the proof
of \cite{GeLei}, Proposition 1.6. In fact, for $a\in C_{c}\left(  V\right)  $,%
\[
\int_{M}a\left(  x\right)  \left\vert e^{i\alpha_{h}ht\Delta/2}u_{h}\left(
x\right)  \right\vert ^{2}dm=\int_{\mathbf{R}}\varphi\left(  t\right)
(\operatorname*{op}\nolimits_{h}\left(  a_{R}\right)  e^{i\alpha_{h}%
ht\Delta/2}u_{h}|e^{i\alpha_{h}ht\Delta/2}u_{h})dt+r\left(  R,h\right)
\]
where $a_{R}\left(  x,\xi\right)  :=a\left(  x\right)  \chi\left(
\xi/R\right)  $ for some $\chi\in C_{c}^{\infty}\left(  \mathbf{R}^{d}\right)
$ with $\chi\left(  0\right)  =1$, $0\leq\chi\leq1$, and $\limsup
_{h\rightarrow0}r\left(  R,h\right)  \rightarrow0$ as $R\rightarrow\infty$
because of (\ref{hoscE}).\medskip
\end{proof}

\begin{proof}
[Proof of Theorem \ref{ThmInvEg}]A direct computation shows:%
\begin{equation}
\frac{d}{dt}(\operatorname*{op}\nolimits_{h}\left(  a\right)  e^{i\alpha
_{h}ht\Delta/2}u_{h}|e^{i\alpha_{h}ht\Delta/2}u_{h})=\frac{i\alpha_{h}h}%
{2}(\left[  \operatorname*{op}\nolimits_{h}\left(  a\right)  ,\Delta\right]
e^{i\alpha_{h}ht\Delta/2}u_{h}|e^{i\alpha_{h}ht\Delta/2}u_{h}). \label{der}%
\end{equation}
Given $\varphi\in C_{c}^{\infty}\left(  \mathbf{R}\right)  $, identities
(\ref{der}) and (\ref{CS}) ensure:%
\begin{equation}
\frac{1}{\alpha_{h}}\int_{\mathbf{R}}\varphi^{\prime}\left(  t\right)
(\operatorname*{op}\nolimits_{h}\left(  a\right)  e^{i\alpha_{h}ht\Delta
/2}u_{h}|e^{i\alpha_{h}ht\Delta/2}u_{h})dt= \label{inv1}%
\end{equation}%
\begin{equation}
=\frac{1}{2}\int_{\mathbf{R}}\varphi\left(  t\right)  (\operatorname*{op}%
\nolimits_{h}\left(  \left\{  a,p\right\}  \right)  e^{i\alpha_{h}ht\Delta
/2}u_{h}|e^{i\alpha_{h}ht\Delta/2}u_{h})dt+\int_{\mathbf{R}}\varphi\left(
t\right)  d_{h}\left(  t\right)  dt, \label{inv2}%
\end{equation}
where $d_{h}\left(  t\right)  \leq Ch\left\Vert u_{h}\right\Vert
_{L^{2}\left(  M\right)  }^{2}$. Taking limits, we conclude that for every
$a\in C_{c}^{\infty}\left(  T^{\ast}M\right)  $ and almost every
$t\in\mathbf{R}$:%
\begin{equation}
\int_{T^{\ast}M}\left\{  a,p\right\}  \left(  x,\xi\right)  \mu\left(
t,dx,d\xi\right)  dt=0, \label{poissonB}%
\end{equation}
and therefore prove i).

Now we turn to the proof of ii). If the symbol $a\in C_{c}^{\infty}\left(
T^{\ast}M\right)  $ is $\phi_{s}$-invariant then $\left\{  a,p\right\}  =0$;
in this case (\ref{CS}) and (\ref{der}) give:%
\begin{align*}
&  (\operatorname*{op}\nolimits_{h}\left(  a\right)  e^{i\alpha_{h}ht\Delta
/2}u_{h}|e^{i\alpha_{h}ht\Delta/2}u_{h})-\left(  \operatorname*{op}%
\nolimits_{h}\left(  a\right)  u_{h}|u_{h}\right) \\
&  =-\alpha_{h}h\frac{i}{2}\int_{0}^{t}(\operatorname*{op}\nolimits_{h}\left(
\left\{  a,r\right\}  \right)  e^{i\alpha_{h}hs\Delta/2}u_{h}|e^{i\alpha
_{h}hs\Delta/2}u_{h})ds-\int_{0}^{t}f_{h}\left(  s\right)  ds,
\end{align*}
with $f_{h}:=2\alpha_{h}h^{-2}\left(  s_{h}e^{i\alpha_{h}ht\Delta/2}%
u_{h}|e^{i\alpha_{h}ht\Delta/2}u_{h}\right)  \leq C\left\Vert u_{h}\right\Vert
_{L^{2}\left(  M\right)  }^{2}o\left(  1\right)  $. Taking imaginary and real
parts, we infer, respectively:%
\begin{equation}
\lim_{h\rightarrow0^{+}}\alpha_{h}h\int_{0}^{t}(\operatorname*{op}%
\nolimits_{h}\left(  \left\{  a,r\right\}  \right)  e^{i\alpha_{h}hs\Delta
/2}u_{h}|e^{i\alpha_{h}hs\Delta/2}u_{h})ds=0, \label{imazero}%
\end{equation}
and, for every $t\in\mathbf{R}$,
\[
\lim_{h\rightarrow0+}(\operatorname*{op}\nolimits_{h}\left(  a\right)
e^{i\alpha_{h}ht\Delta/2}u_{h}|e^{i\alpha_{h}ht\Delta/2}u_{h})=\lim
_{h\rightarrow0+}\left(  \operatorname*{op}\nolimits_{h}\left(  a\right)
u_{h}|u_{h}\right)  ,
\]
which is precisely (\ref{CI}).
\end{proof}

\begin{remark}
Equation (\ref{imazero}) does not give any new information about the
semiclassical measures $\mu\left(  t,\cdot\right)  $. Applying Jacobi's
identity, formula (\ref{defr}), and using the invariance of $a$ we obtain:%
\[
\left\{  a,r\right\}  =\frac{1}{2}\left\{  a,\left\{  p,\log\rho\right\}
\right\}  =-\frac{1}{2}\left\{  p,\left\{  \log\rho,a\right\}  \right\}  .
\]
Therefore (\ref{imazero}) may be restated as $\int_{T^{\ast}M}\left\{
p,\left\{  \log\rho,a\right\}  \right\}  d\mu=0$, which was already deduced
from the invariance property (\ref{poissonB}), since equations (\ref{inv1})
and (\ref{inv2}) imply:%
\[
\alpha_{h}h\int_{\mathbf{R}}\varphi\left(  t\right)  (\operatorname*{op}%
\nolimits_{h}\left(  \left\{  b,p\right\}  \right)  e^{i\alpha_{h}ht\Delta
/2}u_{h}|e^{i\alpha_{h}ht\Delta/2}u_{h})dt=o\left(  1\right)  ,
\]
for every symbol $b\in C_{c}^{\infty}\left(  T^{\ast}M\right)  $.
\end{remark}

\begin{remark}
\label{RmkComm}Note that the restriction $\alpha_{h}=o\left(  h^{-2}\right)  $
may be removed as soon as we have $\left[  \operatorname*{op}\nolimits_{h}%
\left(  a\right)  ,\Delta\right]  =0$ for every invariant $a\in C_{c}^{\infty
}\left(  T^{\ast}M\right)  $. This is the case when $M$ is either the
Euclidean space or the flat torus, for instance.
\end{remark}

\section{\label{SecAv}Averaging formulae}

Now we turn to the proof of Theorem \ref{ThmZoll}. Our first remark concerns
the case in which the average $\left\langle a\right\rangle $ of a symbol is smooth.

\begin{lemma}
\label{Lemma av}Let $\mu$ and $\mu_{0}$ be as in Theorem \ref{ThmE} and
$\alpha_{h}=o\left(  1/h^{2}\right)  $. Suppose that $a\in C_{c}^{\infty
}\left(  T^{\ast}M\right)  $ is such that $\left\langle a\right\rangle $ is
infinitely differentiable in $T^{\ast}M$. Then, for almost every
$t\in\mathbf{R}$,%
\[
\int_{T^{\ast}M}a\left(  x,\xi\right)  \mu\left(  t,dx,d\xi\right)
=\int_{T^{\ast}M}\left\langle a\right\rangle \left(  x,\xi\right)  \mu
_{0}\left(  dx,d\xi\right)  .
\]

\end{lemma}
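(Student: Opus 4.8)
The plan is to exploit the fact that the average $\left\langle a\right\rangle$ is, by construction, invariant under the geodesic flow $\phi_s$, so that $\left\{\left\langle a\right\rangle, p\right\} = 0$; when $\left\langle a\right\rangle$ is also smooth and compactly supported we may apply part ii) of Theorem \ref{ThmInvEg} to the symbol $\left\langle a\right\rangle$ itself. This gives, for every $t\in\mathbf{R}$,
\[
\lim_{h\rightarrow0^{+}}(\operatorname*{op}\nolimits_{h}\left(\left\langle a\right\rangle\right)e^{i\alpha_{h}ht\Delta/2}u_{h}|e^{i\alpha_{h}ht\Delta/2}u_{h})=\int_{T^{\ast}M}\left\langle a\right\rangle\left(x,\xi\right)\mu_{0}\left(dx,d\xi\right),
\]
so it remains only to show that testing $\mu(t,\cdot)$ against $a$ is the same as testing it against $\left\langle a\right\rangle$, i.e. that $\int a\,\mu(t,\cdot) = \int \left\langle a\right\rangle\,\mu(t,\cdot)$ for a.e.\ $t$. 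Combined with part i) of Theorem \ref{ThmInvEg} (invariance of $\mu(t,\cdot)$ under $\phi_s$), this will close the argument, since for an invariant measure one expects $\int a\,d\mu(t,\cdot) = \int \left\langle a\right\rangle\,d\mu(t,\cdot)$ by a Birkhoff-type averaging.

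The key steps, in order, are as follows. First, record that $\left\langle a\right\rangle\circ\phi_s = \left\langle a\right\rangle$ and that, under the smoothness assumption, $\left\langle a\right\rangle\in C_c^\infty(T^*M)$ (compact support is inherited from $a$ since the flow is homogeneous and $\operatorname{supp}a$ is compact away from $\xi=0$, or trivially if $a$ is supported near $\xi=0$ where the average is $a$ itself up to the stationary set). Second, apply Theorem \ref{ThmInvEg} ii) to the invariant symbol $\left\langle a\right\rangle$ to get the displayed pointwise limit; by the uniform-in-$t$ bound $\sup_t|(\operatorname*{op}_h(\left\langle a\right\rangle)\psi_h|\psi_h)|\leq C\|\left\langle a\right\rangle\|_{C^{d+1}}\|u_h\|^2$, dominated convergence lets us integrate against any $\varphi\in L^1(\mathbf{R})$ and conclude, via part i) of Theorem \ref{ThmE}, that $\int_{T^*M}\left\langle a\right\rangle\,\mu(t,dx,d\xi) = \int_{T^*M}\left\langle a\right\rangle\,\mu_0(dx,d\xi)$ for a.e.\ $t$. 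Third, prove $\int a\,\mu(t,\cdot) = \int\left\langle a\right\rangle\,\mu(t,\cdot)$ for a.e.\ $t$: by part i) of Theorem \ref{ThmInvEg}, $\mu(t,\cdot)$ is $\phi_s$-invariant for a.e.\ $t$, so $\int a\circ\phi_s\,d\mu(t,\cdot) = \int a\,d\mu(t,\cdot)$ for all $s$; averaging this identity over $s\in(0,T)$ and letting $T\to\infty$, the left side converges (by the definition \eqref{aver} of $\left\langle a\right\rangle$, boundedness of the integrand, and dominated convergence with respect to the finite measure $\mu(t,\cdot)$) to $\int\left\langle a\right\rangle\,d\mu(t,\cdot)$, while the right side is constant in $T$; this yields the claimed equality. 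Chaining the three steps gives $\int a\,\mu(t,dx,d\xi) = \int\left\langle a\right\rangle\,\mu_0(dx,d\xi)$ for a.e.\ $t$, as desired.

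I expect the main obstacle to be the bookkeeping in step three, specifically the interchange of the $T\to\infty$ limit with the $\mu(t,\cdot)$-integration. One must justify that $\frac1T\int_0^T a(\phi_s(x,\xi))\,ds \to \left\langle a\right\rangle(x,\xi)$ holds $\mu(t,\cdot)$-almost everywhere (or at least in a way compatible with integration), and then apply dominated convergence; since $a$ is bounded and $\mu(t,\cdot)$ is a finite measure this is routine once the pointwise convergence is available, but the pointwise convergence of the Cesàro averages is exactly where any subtlety involving the stationary set $\{\xi=0\}$ or non-uniformity of periods could enter. In the Zoll-type applications the flow is periodic on each cosphere, so the limit \eqref{aver} exists everywhere and this difficulty evaporates; in the general statement of the lemma one simply assumes $\left\langle a\right\rangle$ exists and is smooth, so the Cesàro averages converge pointwise by hypothesis, and the argument goes through. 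A secondary technical point is handling the behavior near $\xi = 0$ in establishing $\left\langle a\right\rangle\in C_c^\infty$; this is where the smoothness hypothesis on $\left\langle a\right\rangle$ is genuinely used, rather than merely its existence.
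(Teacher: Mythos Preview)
Your proposal is correct and follows essentially the same approach as the paper's proof: apply Theorem~\ref{ThmInvEg}~ii) to the invariant symbol $\langle a\rangle$ to obtain $\int\langle a\rangle\,d\mu(t,\cdot)=\int\langle a\rangle\,d\mu_0$, and then use the $\phi_s$-invariance of $\mu(t,\cdot)$ together with dominated convergence to replace $\langle a\rangle$ by $a$ on the left side. Your write-up is in fact more explicit than the paper's on the passage from the pointwise-in-$t$ statement of Theorem~\ref{ThmInvEg}~ii) to the a.e.-in-$t$ identity for $\mu(t,\cdot)$ (via integration against $\varphi\in L^1$ and Theorem~\ref{ThmE}), which the paper leaves implicit.
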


\begin{proof}
From statement ii) in Theorem \ref{ThmInvEg} we infer, noticing that
$\left\langle a\right\rangle $ is necessarily $\phi_{s}$-invariant, that, for
a.e. $t\in\mathbf{R}$,%
\[
\int_{T^{\ast}M}\left\langle a\right\rangle \left(  x,\xi\right)  \mu\left(
t,dx,d\xi\right)  =\int_{T^{\ast}M}\left\langle a\right\rangle \left(
x,\xi\right)  \mu_{0}\left(  dx,d\xi\right)  .
\]
Now, taking into account that $\mu\left(  t,\cdot\right)  $ is $\phi_{s}%
$-invariant for a.e. $t\in\mathbf{R}$ and using the dominated convergence
theorem, we deduce:%
\begin{align*}
\int_{T^{\ast}M}\left\langle a\right\rangle \left(  x,\xi\right)  \mu\left(
t,dx,d\xi\right)   &  =\lim_{T\rightarrow\infty}\frac{1}{T}\int_{0}^{T}%
\int_{T^{\ast}M}a\left(  \phi_{s}\left(  x,\xi\right)  \right)  \mu\left(
t,dx,d\xi\right)  ds\\
&  =\int_{T^{\ast}M}a\left(  x,\xi\right)  \mu\left(  t,dx,d\xi\right)  ,
\end{align*}
as claimed.\bigskip
\end{proof}

Assume that all the geodesics of $\left(  M,g\right)  $ are closed. This
implies (see \cite{Besse}) that there exists $L>0$ such that, for every
$\left(  x,\xi\right)  \in T^{\ast}M$ with $\left\Vert \xi\right\Vert _{x}=1$,
the geodesic
\[
\mathbf{R}\ni s\longmapsto\phi_{s}\left(  x,\xi\right)  \in T^{\ast}M
\]
is $L$-periodic. As a consequence of homogeneity, the geodesic corresponding
to a general $\left(  x,\xi\right)  \in T^{\ast}M\setminus\left\{  0\right\}
$ is $L/\left\Vert \xi\right\Vert _{x}$-periodic.\bigskip

\begin{proof}
[\textbf{Proof of Theorem \ref{ThmZoll}}]Let $a\in C_{c}^{\infty}\left(
T^{\ast}M\right)  $ vanish in a neighborhood of $\left\{  \xi=0\right\}  $.
Due to the periodicity of the geodesic flow, the average of $a$ equals:%
\[
\left\langle a\right\rangle \left(  x,\xi\right)  :=\frac{\left\Vert
\xi\right\Vert _{x}}{L}\int_{0}^{L/\left\Vert \xi\right\Vert _{x}}a\left(
\phi_{s}\left(  x,\xi\right)  \right)  ds,
\]
for every $\left(  x,\xi\right)  \in T^{\ast}M$. It follows that $\left\langle
a\right\rangle $ is a smooth function; using Lemma \ref{Lemma av} we conclude
that identity (\ref{averZ}) holds for $a$. This implies that $\mu\left(
t,T^{\ast}M\setminus\left\{  0\right\}  \right)  =\mu_{0}\left(  T^{\ast
}M\setminus\left\{  0\right\}  \right)  $ for a.e. $t\in\mathbf{R}$. Since $M$
is compact, (\ref{den}) implies that, again for a.e. $t\in\mathbf{R}$, the
total masses of $\mu\left(  t,\cdot\right)  $ and $\mu_{0}$ are equal.
Finally, as $\mu_{0}\left(  \left\{  \xi=0\right\}  \right)  =0$ we must
necessarily have $\mu\left(  t,\left\{  \xi=0\right\}  \right)  =0$ and
formula (\ref{averZ}) follows for arbitrary $a\in C_{c}^{\infty}\left(
T^{\ast}M\right)  $.\medskip
\end{proof}

\begin{proof}
[Proof of Proposition \ref{PropRd}]The proof is immediate: for almost every
$t\in\mathbf{R}$ the measures $\mu\left(  t,\cdot\right)  $ are invariant by
translations $\left(  x,\xi\right)  \mapsto\left(  x+s\xi,\xi\right)  $ (by
Theorem \ref{ThmInvEg}, i)) and do not charge the set $\left\{  \xi=0\right\}
$, as the projection of $\mu\left(  t,\cdot\right)  $ on $\xi$ coincides with
that of $\mu_{0}$ (this can be checked directly, or seen as a consequence of
Theorem \ref{ThmInvEg}, ii)). This and the fact that $\mu\left(
t,\cdot\right)  $ is finite for a.e. $t$ forces $\mu=0$.\medskip
\end{proof}

\begin{proof}
[Proof of Proposition \ref{Prop AvTorus}]Let $a\in C_{c}^{\infty}\left(
T^{\ast}\mathbf{T}^{d}\right)  $, and consider its average $\left\langle
a\right\rangle $ along the geodesic flow. The hypothesis $\mu_{0}\left(
\mathbf{T}^{d}\times\Omega\right)  =0$ ensures that, for $\mu_{0}$-almost
every $\xi\in\mathbf{R}^{d}$ we have
\[
\left\langle a\right\rangle \left(  x,\xi\right)  =\overline{a}\left(
\xi\right)  :=\frac{1}{\left(  2\pi\right)  ^{d}}\int_{\mathbf{T}^{d}}a\left(
y,\xi\right)  dy,
\]
as only dense geodesics are involved in the average. We cannot apply Lemma
\ref{Lemma av} in this setting, since $\left\langle a\right\rangle $ is not
smooth. However, by Theorem \ref{ThmInvEg}, ii) (note that there is no
restriction on $\alpha_{h}$, by Remark \ref{RmkComm}), we have that
$\int_{\mathbf{T}^{d}}\mu\left(  t,dx,\cdot\right)  =\int_{\mathbf{T}^{d}}%
\mu_{0}\left(  dx,\cdot\right)  $ and therefore, for \emph{a.e.}
$t\in\mathbf{R}$,%
\[
\int_{T^{\ast}\mathbf{T}^{d}}\left\langle a\right\rangle \left(  x,\xi\right)
\mu\left(  t,dx,d\xi\right)  =\int_{T^{\ast}\mathbf{T}^{d}}\overline{a}\left(
\xi\right)  \mu_{0}\left(  dx,d\xi\right)  .
\]
\ We apply the dominated convergence theorem and use the invariance of
$\mu\left(  t,\cdot\right)  $ under the geodesic flow to conclude%
\[
\int_{T^{\ast}\mathbf{T}^{d}}a\left(  x,\xi\right)  \mu\left(  t,dx,d\xi
\right)  =\int_{T^{\ast}\mathbf{T}^{d}}\left\langle a\right\rangle \left(
x,\xi\right)  \mu\left(  t,dx,d\xi\right)  ,
\]
for \emph{a.e.} $t\in\mathbf{R}$, and the proof follows.\medskip
\end{proof}

\section{\label{SecT}Concentration on resonant frequencies}

In this section we prove Proposition \ref{PropQuantTorus}. From now on, we
shall identify functions defined on $\mathbf{T}^{d}$ to the $2\pi
\mathbf{Z}^{d}$-periodic functions defined on $\mathbf{R}^{d}$. If so, the
Euclidean Wigner distribution of
\[
u\left(  x\right)  =\sum_{k\in\mathbf{Z}^{d}}\widehat{u}\left(  k\right)
\frac{e^{ik\cdot x}}{\left(  2\pi\right)  ^{d/2}}\in L^{2}\left(
\mathbf{T}^{d}\right)
\]
is given by:%
\[
l_{u}^{h}\left(  x,\xi\right)  :=\sum_{k,j\in\mathbf{Z}^{d}}\widehat{u}\left(
k\right)  \overline{\widehat{u}\left(  j\right)  }\frac{e^{i\left(
k-j\right)  \cdot x}}{\left(  2\pi\right)  ^{d}}\delta_{\frac{h}{2}\left(
k+j\right)  }\left(  \xi\right)  .
\]
It is easy to check that $l_{u}^{h}$ differs from the Wigner distribution
$w_{u}^{h}$ defined in Section \ref{SecSSS} by an $\mathcal{O}\left(
h\right)  $ term. Therefore, their limits coincide and give the usual
semiclassical measures. Clearly, $L_{u}^{h}\left(  t,\cdot\right)
:=l_{e^{i\alpha_{h}ht\Delta/2}u_{h}}^{h}$ satisfies,
\[
\int_{\mathbf{R}}\varphi\left(  t\right)  \left\langle L_{u}^{h}\left(
t,\cdot\right)  ,a\right\rangle dt=\frac{1}{\left(  2\pi\right)  ^{d/2}}%
\sum_{k,j\in\mathbf{Z}^{d}}\widehat{\varphi}\left(  \frac{\left\vert
k\right\vert ^{2}-\left\vert j\right\vert ^{2}}{2}\right)  \widehat{u}\left(
k\right)  \overline{\widehat{u}\left(  j\right)  }a_{j-k}\left(  \frac
{hk+hj}{2}\right)  ,
\]
for every $a\in C_{c}^{\infty}\left(  T^{\ast}\mathbf{T}^{d}\right)  $ of the
form $a\left(  x,\xi\right)  =\left(  2\pi\right)  ^{-d/2}\sum_{k\in
\mathbf{Z}^{d}}a_{k}\left(  \xi\right)  e^{ik\cdot x}$.\medskip

We now define the sequences $\left(  u_{h}\right)  $ and $\left(
v_{h}\right)  $. Let $\theta_{0}\in\mathbf{R}^{d}\setminus\Omega$; let
$\left(  k_{n}\right)  $ be a sequence in $\mathbf{Q}^{d}$ such that
$\lim_{n\rightarrow\infty}k_{n}=\theta_{0}$. Suppose that $k_{n}=\left(
p_{n}^{1}/q_{n}^{1},...,p_{n}^{d}/q_{n}^{d}\right)  $ with $p_{n}^{j}$ and
$q_{n}^{j}$ relatively prime; let $q_{n}$ denote the least common multiple of
$q_{n}^{1}$,...,$q_{n}^{d}$ and write $\lambda_{1}:=q_{1}$, and $\lambda
_{n}=q_{n}\lambda_{n-1}$ for $n>1$. Clearly, only a finite number of the
$q_{n}$ may be equal to one, therefore $\lim_{n\rightarrow\infty}\lambda
_{n}=\infty$. Finally, set $h_{n}:=1/\left(  \lambda_{n}\right)  ^{2}$.

Now write,%
\[
S^{1}\left(  x\right)  :=\xi_{0}\cdot x,\qquad S_{n}^{2}\left(  x\right)
:=\xi_{0}\cdot x+\sqrt{h_{n}}k_{n}\cdot x,
\]
and%
\[
u_{h_{n}}\left(  x\right)  :=\varrho\left(  x\right)  e^{iS^{1}\left(
x\right)  /h_{n}},\qquad v_{h_{n}}\left(  x\right)  :=\varrho\left(  x\right)
e^{iS_{n}^{2}\left(  x\right)  /h_{n}}.
\]
Since $\lambda_{n}^{2}\xi_{0},\lambda_{n}k_{n}\in\mathbf{Z}^{d}$, the Fourier
coefficients of $u_{h_{n}}$ and $v_{h_{n}}$ are obtained from those of
$\varrho$ as:%
\[
\widehat{u_{h_{n}}}\left(  k\right)  =\widehat{\varrho}\left(  k-\lambda
_{n}^{2}\xi_{0}\right)  ,\qquad\widehat{v_{h_{n}}}\left(  k\right)
=\widehat{\varrho}\left(  k-\lambda_{n}^{2}\xi_{0}-\lambda_{n}k_{n}\right)  .
\]
The proof of the fact that the limits of $\left(  l_{u_{h_{n}}}^{h_{n}%
}\right)  ,\left(  l_{v_{h_{n}}}^{h_{n}}\right)  $ coincide with the measure
given by (\ref{mu0osc}) is simple and may be reconstructed following the same
lines as that for the evolution case. We therefore concentrate on the
latter.\medskip

Let us now compute the limit of $\left(  L_{u_{h_{n}}}^{h_{n}}\right)  $;
clearly, it suffices to consider the limit against test functions
$a_{l}\left(  x,\xi\right)  :=b\left(  \xi\right)  e^{-il\cdot x}$ with $b\in
C_{c}^{\infty}\left(  \mathbf{R}^{d}\right)  $ and $\varphi\in L^{1}\left(
\mathbf{R}\right)  $ with $\widehat{\varphi}\in C_{c}\left(  \mathbf{R}%
\right)  $. We can write:%
\begin{align*}
\int_{\mathbf{R}}\varphi\left(  t\right)  \left\langle L_{u_{h_{n}}}^{h_{n}%
}\left(  t,\cdot\right)  ,a_{l}\right\rangle dt  &  =\sum_{k-j=l}b\left(
\frac{h_{n}k+h_{n}j}{2}\right)  \widehat{\varphi}\left(  \frac{\left\vert
k\right\vert ^{2}-\left\vert j\right\vert ^{2}}{2}\right)  \widehat{u_{h_{n}}%
}\left(  k\right)  \overline{\widehat{u_{h_{n}}}\left(  j\right)  }\\
&  =\sum_{k-j=l}b\left(  h_{n}\frac{k+j}{2}+\xi_{0}\right)  \widehat{\varphi
}\left(  l\cdot\left(  \frac{k+j}{2}+\lambda_{n}^{2}\xi_{0}\right)  \right)
\widehat{\varrho}\left(  k\right)  \overline{\widehat{\varrho}\left(
j\right)  }.
\end{align*}
If $l\cdot\xi_{0}\not =0$ then the expression above vanishes as $n\rightarrow
\infty$. To see this, suppose that $\operatorname*{supp}\widehat{\varphi
}\subset\left(  -R,R\right)  $; clearly:%
\begin{equation}
\left\vert \int_{\mathbf{R}}\varphi\left(  t\right)  \left\langle L_{u_{h_{n}%
}}^{h_{n}}\left(  t,\cdot\right)  ,a_{l}\right\rangle dt\right\vert
\leq\left\Vert b\right\Vert _{L^{\infty}\left(  \mathbf{R}^{d}\right)
}\left\vert \sum_{j\in\mathbf{Z}^{d}}\widehat{\varphi}\left(  l\cdot\left(
j+\frac{l}{2}+\lambda_{n}^{2}\xi_{0}\right)  \right)  \widehat{\varrho}\left(
j+l\right)  \overline{\widehat{\varrho}\left(  j\right)  }\right\vert .
\label{nonzeromode}%
\end{equation}
The distance $d_{n}$ between the hyperplane $l\cdot\left(  \xi+l/2+\lambda
_{n}^{2}\xi_{0}\right)  =0$ and the origin tends to infinity as $n\rightarrow
\infty$. Therefore, for $n$ large enough we can estimate (\ref{nonzeromode})
by
\[
\left\Vert b\right\Vert _{L^{\infty}\left(  \mathbf{R}^{d}\right)  }\left\Vert
\widehat{\varphi}\right\Vert _{L^{\infty}\left(  \mathbf{R}\right)  }%
\sum_{\left\vert j\right\vert >d_{n}-2R}\left\vert \widehat{\varrho}\left(
j+l\right)  \overline{\widehat{\varrho}\left(  j\right)  }\right\vert ,
\]
which tends to zero as $n\rightarrow\infty$ since $\varrho\in L^{2}\left(
\mathbf{T}^{d}\right)  $.

When $l\cdot\xi_{0}=0$ we have:%
\[
\int_{\mathbf{R}}\varphi\left(  t\right)  \left\langle L_{u_{h_{n}}}^{h_{n}%
}\left(  t,\cdot\right)  ,a_{l}\right\rangle dt=\sum_{k-j=l}b\left(
h_{n}\frac{k+j}{2}+\xi_{0}\right)  \widehat{\varphi}\left(  \frac{\left\vert
k\right\vert ^{2}-\left\vert j\right\vert ^{2}}{2}\right)  \widehat{\varrho
}\left(  k\right)  \overline{\widehat{\varrho}\left(  j\right)  },
\]
and letting $n\rightarrow\infty$ gives (recall that $\widehat{\varphi}$ is
compactly supported):%
\[
\int_{\mathbf{R\times T}^{d}}\varphi\left(  t\right)  a_{l}\left(  \xi\right)
\mu_{\left(  u_{h}\right)  }\left(  t,dx,d\xi\right)  =b\left(  \xi
_{0}\right)  \sum_{k-j=l}\widehat{\varphi}\left(  \frac{\left\vert
k\right\vert ^{2}-\left\vert j\right\vert ^{2}}{2}\right)  \widehat{\varrho
}\left(  k\right)  \overline{\widehat{\varrho}\left(  j\right)  }.
\]
In conclusion, for a general $a\in C_{c}^{\infty}\left(  T^{\ast}%
\mathbf{T}^{d}\right)  $ of the form $a\left(  x,\xi\right)  :=\sum
_{l\in\mathbb{Z}^{d}}b_{l}\left(  \xi\right)  e^{-il\cdot x}$ one has:%
\begin{align*}
\int_{\mathbf{R\times T}^{d}}\varphi\left(  t\right)  a\left(  x,\xi\right)
\mu_{\left(  u_{h}\right)  }\left(  t,dx,d\xi\right)   &  =\sum_{l\cdot\xi
_{0}=0}\sum_{k-j=l}b_{l}\left(  \xi_{0}\right)  \widehat{\varphi}\left(
\frac{\left\vert k\right\vert ^{2}-\left\vert j\right\vert ^{2}}{2}\right)
\widehat{\varrho}\left(  k\right)  \overline{\widehat{\varrho}\left(
j\right)  }\\
&  =\int_{\mathbf{T}^{d}}\left\langle a\right\rangle \left(  x,\xi_{0}\right)
\left\vert e^{it\Delta/2}\rho\left(  x\right)  \right\vert ^{2}dx,
\end{align*}
($\left\langle a\right\rangle $ being defined by (\ref{aver})) and therefore
(\ref{mu1}) holds for $\left(  u_{h_{n}}\right)  $. \medskip

Now we turn to the corresponding computation for$\left(  L_{v_{h_{n}}}^{h_{n}%
}\right)  $. Reasoning as before, we have:%
\[
\left\vert \int_{\mathbf{R}}\varphi\left(  t\right)  \left\langle L_{v_{h_{n}%
}}^{h_{n}}\left(  t,\cdot\right)  ,a_{l}\right\rangle dt\right\vert
\leq\left\Vert b\right\Vert _{L^{\infty}\left(  \mathbf{R}^{d}\right)
}\left\vert \sum_{j\in\mathbf{Z}^{d}}\widehat{\varphi}\left(  l\cdot\left(
j+\frac{l}{2}+\lambda_{n}^{2}\xi_{0}+\lambda_{n}k_{n}\right)  \right)
\widehat{\varrho}\left(  j+l\right)  \overline{\widehat{\varrho}\left(
j\right)  }\right\vert .
\]
Now, if $l\neq0$ it is easy to check that distance between the hyperplane
$l\cdot\left(  \xi+l/2+\lambda_{n}^{2}\xi_{0}+\lambda_{n}k_{n}\right)  =0$ and
the origin always tends to infinity, since $\lim_{n\rightarrow\infty}l\cdot
k_{n}=l\cdot\theta_{0}\neq0$ and therefore, $\lim_{n\rightarrow\infty
}\left\vert l\cdot\left(  l/2+\lambda_{n}^{2}\xi_{0}+\lambda_{n}k_{n}\right)
\right\vert =\infty$. The same argument we used for $\left(  u_{h_{n}}\right)
$ now gives us:%
\[
\lim_{n\rightarrow\infty}\int_{\mathbf{R}}\varphi\left(  t\right)
\left\langle L_{v_{h_{n}}}^{h_{n}}\left(  t,\cdot\right)  ,a_{l}\right\rangle
dt=0.
\]
When $l=0$ we have%
\[
\int_{\mathbf{R}}\varphi\left(  t\right)  \left\langle L_{u_{h_{n}}}^{h_{n}%
}\left(  t,\cdot\right)  ,a_{l}\right\rangle dt=\widehat{\varphi}\left(
0\right)  \sum_{j\in\mathbf{Z}^{d}}b\left(  h_{n}j+\sqrt{h_{n}}k_{n}+\xi
_{0}\right)  \left\vert \widehat{\varrho}\left(  j\right)  \right\vert ^{2},
\]
which converges precisely to $\left(  2\pi\right)  ^{-d}\widehat{\varphi
}\left(  0\right)  b\left(  \xi_{0}\right)  \left\Vert \varrho\right\Vert
_{L^{2}\left(  \mathbf{T}^{d}\right)  }^{2}$. This shows that (\ref{mu2}) holds.

\section{Schr\"{o}dinger equations with a potential\label{SecV}}

Some of the results presented here have an analogue for the more general
Schr\"{o}dinger equation:
\begin{equation}
ih\partial_{t}\psi_{h}\left(  t,x\right)  +\frac{h^{2}}{2}\Delta\psi
_{h}\left(  t,x\right)  -V\left(  x\right)  \psi_{h}\left(  t,x\right)
=0\qquad\left(  t,x\right)  \in\mathbf{R}\times M. \label{SEV}%
\end{equation}
provided we assume that the potential $V\in C^{2}\left(  M\right)  $
satisfies:%
\begin{equation}
\text{the Hamiltonian flow }\phi_{t}^{H}\text{ on }T^{\ast}M\text{ associated
to }H\left(  x,\xi\right)  :=\frac{1}{2}\left\Vert \xi\right\Vert _{x}%
^{2}+V\left(  x\right)  \text{ is complete;} \label{VI}%
\end{equation}%
\begin{equation}
\text{the operator }\mathcal{H}_{h}:=\frac{h^{2}}{2}\Delta-V\text{ is
essentially self-adjoint in }L^{2}\left(  M\right)  . \label{Hh}%
\end{equation}
Note that both conditions are met when, for instance, $V\geq-C$ for some
$C>0$. See \cite{ShubinSelfAdj, ReedSimon2} and the references therein for a
thorough discussion on this issue.

The analogues of Theorem \ref{ThmE} and \ref{ThmInvEg} hold for the evolved
Wigner distributions in this framework:
\[
\left\langle W_{u_{h}}^{h}\left(  t,\cdot\right)  ,a\right\rangle
:=(\operatorname*{op}\nolimits_{h}\left(  a\right)  e^{it/h\mathcal{H}_{h}%
}u_{h}|e^{it/h\mathcal{H}_{h}}u_{h}).
\]

\begin{theorem}
Let $\left(  u_{h}\right)  $ be a sequence bounded in $L^{2}\left(  M\right)
$ satisfying conditions (\ref{h-osc}) (with $h^{2}\Delta$ replaced by
$\mathcal{H}_{h}$) and (\ref{convID}). Let $\mu_{0}$ be its semiclassical
measure. Then, at least for some subsequence, the following hold.\medskip

i) There exists a measure $\mu\in L^{\infty}\left(  \mathbf{R};\mathcal{M}%
_{+}\left(  T^{\ast}M\right)  \right)  $ such that%
\[
\lim_{h\rightarrow0^{+}}\int_{\mathbf{R}}\varphi\left(  t\right)  \left\langle
W_{u_{h}}^{h}\left(  \alpha_{h}t,\cdot\right)  ,a\right\rangle dt=\int
_{\mathbf{R}}\varphi\left(  t\right)  \int_{T^{\ast}M}a\left(  x,\xi\right)
\mu\left(  t,dx,d\xi\right)  dt,
\]
for every $\varphi\in L^{1}\left(  \mathbf{R}\right)  $ and $a\in
C_{c}^{\infty}\left(  T^{\ast}M\right)  $.\medskip

ii) For a.e.$t\in\mathbf{R}$, the measure $\mu\left(  t,\cdot\right)  $ is
invariant under the Hamiltonian flow $\phi_{s}^{H}$. \medskip

iii) Given $a\in C_{c}^{\infty}\left(  T^{\ast}M\right)  $ invariant under the
classical flow $\phi_{t}^{H}$ and $\alpha_{h}=o\left(  1/h^{2}\right)  $, the
following holds:%
\[
\lim_{h\rightarrow0^{+}}\left\langle W_{u_{h}}^{h}\left(  \alpha_{h}%
t,\cdot\right)  ,a\right\rangle =\int_{T^{\ast}M}a\left(  x,\xi\right)
\mu_{0}\left(  dx,d\xi\right)  ,\qquad\text{\emph{for every}}\emph{\ }%
t\in\mathbf{R.}%
\]

\end{theorem}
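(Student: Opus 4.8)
The plan is to repeat, essentially verbatim, the proofs of Theorems \ref{ThmE} and \ref{ThmInvEg}, with the geodesic Hamiltonian $\tfrac12\|\xi\|_x^2$ replaced throughout by the classical Hamiltonian $H(x,\xi)=\tfrac12\|\xi\|_x^2+V(x)$ and $-\tfrac{h^2}{2}\Delta$ by $-\mathcal{H}_h$. Hypotheses (\ref{VI}) and (\ref{Hh}) are exactly what make $\phi_t^H$ a global flow and $e^{it\mathcal{H}_h/h}$ a well-defined unitary group, so the statement is meaningful. The single new analytic input is a commutator identity for $\mathcal{H}_h$: since $V$ acts as the multiplication operator $\operatorname*{op}\nolimits_h(V)$, property (D) applied to $-h^2\Delta$ together with the elementary expansion $[\operatorname*{op}\nolimits_h(a),V]=\tfrac{h}{i}\operatorname*{op}\nolimits_h(\{a,V\})+\tilde s_h$ (here $\{a,V\}=\partial_\xi a\cdot\nabla V$, and $\|\tilde s_h\|_{\mathcal{L}(L^2)}=o(h^2)$ when $V\in C^2$) gives $[\operatorname*{op}\nolimits_h(a),\mathcal{H}_h]=-\tfrac{h}{i}\operatorname*{op}\nolimits_h(\{a,H\})+\mathcal{O}_{\mathcal{L}(L^2)}(h^2)$, because $\tfrac12\{a,p\}+\{a,V\}=\{a,H\}$; the $\mathcal{O}(h^2)$ remainder equals $-\tfrac{h^2}{2}\operatorname*{op}\nolimits_h(\{a,r\})-\tfrac12 s_h-\tilde s_h$ with $\|s_h\|_{\mathcal{L}(L^2)}\le Ch^3$.

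For part i) I would follow the proof of Theorem \ref{ThmE} line by line: form the space--time Wigner distributions of $\psi_h(t,\cdot):=e^{i\alpha_h t\mathcal{H}_h/h}u_h$, extract a subsequence along which they converge to a positive measure $\tilde\mu$ on $T^\ast(\mathbf{R}\times M)$, and cut off the variable dual to $t$. The cut-off remainder is controlled as before because $hD_t\psi_h=\alpha_h\mathcal{H}_h\psi_h$, so $\sigma_R\big(\tfrac{h}{\alpha_h}D_t\big)$ acts on $\psi_h$ as $\sigma_R(\mathcal{H}_h)$, which commutes with the propagator; hence $\|\sigma_R(\mathcal{H}_h)\psi_h\|_{L^2}=\|\sigma_R(\mathcal{H}_h)u_h\|_{L^2}$ is small by the $h$-oscillation hypothesis for $\mathcal{H}_h$ (and the functional calculus for $\mathcal{H}_h$, which follows from that for $\Delta$ since $V$ is a bounded perturbation). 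Setting $\mu(t,\cdot):=\int_{\mathbf{R}}\tilde\mu(t,\cdot,d\tau,\cdot)$ and using the uniform bound $|(\operatorname*{op}\nolimits_h(a)\psi_h|\psi_h)|\le C\|a\|_{C^{d+1}}\|u_h\|_{L^2}^2$ to pass from $C_c^\infty$ to $L^1$ test functions in $t$ finishes part i). For part ii) I would differentiate $(\operatorname*{op}\nolimits_h(a)\psi_h|\psi_h)$ in $t$, getting $\tfrac{i\alpha_h}{h}([\operatorname*{op}\nolimits_h(a),\mathcal{H}_h]\psi_h|\psi_h)$, insert the commutator identity, divide by $\alpha_h$ and integrate against $\varphi'\in C_c^\infty(\mathbf{R})$; all error terms are $\mathcal{O}(h)$ after dividing by $\alpha_h$, so in the limit $\int_{T^\ast M}\{a,H\}\,\mu(t,dx,d\xi)=0$ for a.e.\ $t$ and every $a$, i.e.\ $\mu(t,\cdot)$ is $\phi_s^H$-invariant. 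Neither part imposes any condition on $\alpha_h$.

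For part iii), with $a$ invariant under $\phi_t^H$ one has $\{a,H\}=0$, so the $\mathcal{O}(h)$ part of $[\operatorname*{op}\nolimits_h(a),\mathcal{H}_h]$ vanishes and only $-\tfrac{h^2}{2}\operatorname*{op}\nolimits_h(\{a,r\})-\tfrac12 s_h-\tilde s_h$ remains. By Duhamel,
\[
(\operatorname*{op}\nolimits_h(a)\psi_h(t)|\psi_h(t))-(\operatorname*{op}\nolimits_h(a)u_h|u_h)=\frac{i\alpha_h}{h}\int_0^t([\operatorname*{op}\nolimits_h(a),\mathcal{H}_h]\psi_h(s)|\psi_h(s))\,ds;
\]
taking real parts, the $\operatorname*{op}\nolimits_h(\{a,r\})$ contribution is purely imaginary (real, hence self-adjoint, symbol) and drops, the $s_h$ contribution is $\mathcal{O}(\alpha_h h^2)$ and tends to $0$ since $\alpha_h=o(h^{-2})$, and one is reduced to $\tfrac{\alpha_h}{h}\|\tilde s_h\|_{\mathcal{L}(L^2)}\to0$. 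This last point is the main obstacle: to reach the whole range $\alpha_h=o(h^{-2})$ one needs $\|\tilde s_h\|_{\mathcal{L}(L^2)}=\mathcal{O}(h^3)$, which is available only if the next term of the Moyal expansion of $[\operatorname*{op}\nolimits_h(a),V]$ can be written, i.e.\ if $V$ has a third bounded derivative; for merely $C^2$ potentials one gets $\|\tilde s_h\|=o(h^2)$, which covers $\alpha_h=o(h^{-1})$, and to recover the full range one would have to mollify $V$ to $V_\varepsilon$ and control $e^{it\mathcal{H}_h/h}-e^{it\mathcal{H}_h^\varepsilon/h}$ by a further Duhamel estimate. Everything else is a direct transcription of Section \ref{SecZ}.
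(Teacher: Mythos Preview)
Your approach is exactly the paper's: the author gives no separate proof of this theorem, only the one-line observation that the presence of $V$ does not add an $h^{2}$ term to the commutator expansion, so that
\[
[\operatorname*{op}\nolimits_{h}(a),\mathcal{H}_{h}]=\frac{h}{i}\operatorname*{op}\nolimits_{h}(\{a,H\})+h^{2}\operatorname*{op}\nolimits_{h}(\{a,r\})+\mathcal{O}(h^{3}),
\]
and then the proofs of Theorems \ref{ThmE} and \ref{ThmInvEg} carry over verbatim. Your write-up reproduces this and supplies the missing details (space--time Wigner distributions, the identity $\sigma_{R}(\tfrac{h}{\alpha_{h}}D_{t})\psi_{h}=\sigma_{R}(\mathcal{H}_{h})\psi_{h}$, the real/imaginary-part splitting for iii)).

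The one point where you are more careful than the paper is the regularity issue for the remainder $\tilde s_{h}$ in $[\operatorname*{op}\nolimits_{h}(a),V]$. The paper simply asserts the $\mathcal{O}(h^{3})$ bound and does not discuss whether $V\in C^{2}$ suffices; your observation that the full range $\alpha_{h}=o(h^{-2})$ needs one more derivative of $V$ (or a mollification argument) is a genuine refinement that the paper glosses over. The small aside that the functional calculus for $\mathcal{H}_{h}$ ``follows from that for $\Delta$ since $V$ is a bounded perturbation'' is not quite justified under the stated hypotheses ($V$ need not be bounded), but the paper does not address this point either.
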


This is a consequence of the fact that the presence of the potential $V$ does
not introduce terms of order $h^{2}$ in the expansion for the commutator:%
\[
\left[  \operatorname*{op}\nolimits_{h}\left(  a\right)  ,\mathcal{H}%
_{h}\right]  =\frac{h}{i}\operatorname*{op}\nolimits_{h}\left(  \left\{
a,H\right\}  \right)  +h^{2}\operatorname*{op}\nolimits_{h}\left(  \left\{
a,r\right\}  \right)  +\mathcal{O}\left(  h^{3}\right)  .
\]
It is easy to prove using Lemma \ref{Lemma av} the following analogue of
Theorem \ref{ThmZoll} in this setting.

\begin{theorem}
\label{ThmPeriodic}Suppose that the Hamiltonian flow $\phi_{t}^{H}$ is
periodic and $\alpha_{h}=o\left(  1/h^{2}\right)  $. Let $\mu_{0}$ be the
semiclassical measure of some sequence $\left(  u_{h}\right)  $ in
$L^{2}\left(  M\right)  $ satisfying (\ref{h-osc}) and (\ref{convID}). Then,
for any subsequence for which (\ref{conv}) holds we have the averaging
formula:%
\begin{equation}
\lim_{h\rightarrow0^{+}}\int_{\mathbf{R}}\varphi\left(  t\right)  \left\langle
W_{u_{h}}^{h}\left(  \alpha_{h}t,\cdot\right)  ,a\right\rangle dt=\left(
\int_{\mathbf{R}}\varphi\left(  t\right)  dt\right)  \int_{T^{\ast}%
M}\left\langle a\right\rangle \left(  x,\xi\right)  \mu_{0}\left(
dx,d\xi\right)  , \label{formZ}%
\end{equation}
for every $a\in C_{c}^{\infty}\left(  T^{\ast}M\right)  $ and $\varphi\in
L^{1}\left(  \mathbf{R}\right)  $, the average $\left\langle a\right\rangle $
being taking with respect to $\phi_{t}^{H}$.
\end{theorem}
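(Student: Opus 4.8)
The plan is to run the proof of Theorem \ref{ThmZoll} with the geodesic flow $\phi_{s}$ replaced by the Hamiltonian flow $\phi_{s}^{H}$, Lemma \ref{Lemma av} replaced by its analogue for equation (\ref{SEV}), and Theorem \ref{ThmInvEg} replaced by parts ii) and iii) of the potential analogue of Theorems \ref{ThmE} and \ref{ThmInvEg} proved just above. What makes the potential case \emph{easier} than Theorem \ref{ThmZoll} is that $H$ is not homogeneous, so a periodic $\phi_{t}^{H}$ has a locally uniform (rather than a $\|\xi\|_{x}$-dependent) period; consequently the averages $\langle a\rangle$ are smooth on all of $T^{\ast}M$ and no hypothesis of the type $\mu_{0}(\{\xi=0\})=0$ is needed.

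First I would establish the analogue of Lemma \ref{Lemma av}: if $a\in C_{c}^{\infty}(T^{\ast}M)$ is such that its $\phi_{s}^{H}$-average $\langle a\rangle$ (defined by (\ref{aver}) with $\phi_{s}$ replaced by $\phi_{s}^{H}$) is infinitely differentiable, then $\int_{T^{\ast}M}a\,\mu(t,dx,d\xi)=\int_{T^{\ast}M}\langle a\rangle\,\mu_{0}(dx,d\xi)$ for a.e.\ $t$. The proof copies that of Lemma \ref{Lemma av} verbatim: since $\langle a\rangle$ is smooth and $\phi_{s}^{H}$-invariant, part iii) of the potential analogue of Theorem \ref{ThmInvEg} (valid because $\alpha_{h}=o(1/h^{2})$) gives $\int\langle a\rangle\,d\mu(t,\cdot)=\int\langle a\rangle\,d\mu_{0}$ for a.e.\ $t$ — one tests against $\varphi\in L^{1}(\mathbf{R})$, passes the limit inside using the uniform bound $|\langle W_{u_{h}}^{h}(\alpha_{h}t,\cdot),a\rangle|\le C\|a\|_{C^{d+1}}\|u_{h}\|_{L^{2}}^{2}$ and part i). Then, $\mu(t,\cdot)$ being $\phi_{s}^{H}$-invariant for a.e.\ $t$ by part ii), Fubini, the invariance, and dominated convergence in $T\to\infty$ give $\int\langle a\rangle\,d\mu(t,\cdot)=\int a\,d\mu(t,\cdot)$, which is the claim.

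Second, I would invoke the periodicity of $\phi_{t}^{H}$. By the structure theory of periodic flows (cf.\ \cite{Besse}, and Wadsley's theorem applied on the compact energy shells $\{E_{-}\le H\le E_{+}\}$ that meet $\operatorname{supp}a$), there is a common period $T_{0}$ there, so that
\[
\langle a\rangle=\frac{1}{T_{0}}\int_{0}^{T_{0}}a\circ\phi_{s}^{H}\,ds
\]
is a smooth, compactly supported function for \emph{every} $a\in C_{c}^{\infty}(T^{\ast}M)$. Applying the analogue of Lemma \ref{Lemma av} to every such $a$ gives, for a.e.\ $t$, $\int_{T^{\ast}M}a\,\mu(t,dx,d\xi)=\int_{T^{\ast}M}\langle a\rangle\,\mu_{0}(dx,d\xi)$; multiplying by $\varphi(t)$, integrating in $t$, and using part i) yields exactly (\ref{formZ}).

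The only step requiring care is the passage from ``$\phi_{t}^{H}$ periodic'' to ``$\langle a\rangle$ smooth and compactly supported on all of $T^{\ast}M$'': one must handle the rest points of the Hamiltonian vector field $X_{H}$ (where one uses that the linearized flow is still periodic) and, should $V$ fail to be proper, the noncompact directions (where one first multiplies $\langle a\rangle$ by a $\phi_{t}^{H}$-invariant cutoff equal to $1$ on $\operatorname{supp}a$). Everything else is a word-for-word transcription of the arguments in Section \ref{SecZ} and Section \ref{SecAv}.
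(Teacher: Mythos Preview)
The proposal is correct and follows precisely the approach the paper indicates: the paper's proof consists of the single remark that Theorem~\ref{ThmPeriodic} ``is easy to prove using Lemma~\ref{Lemma av},'' and you have spelled out exactly that argument --- establish the $\phi_{s}^{H}$-version of Lemma~\ref{Lemma av} from parts ii) and iii) of the potential analogue of Theorem~\ref{ThmInvEg}, then use the periodicity of $\phi_{t}^{H}$ to ensure $\langle a\rangle$ is smooth for every $a\in C_{c}^{\infty}(T^{\ast}M)$. Your observation that the non-homogeneity of $H$ obviates the hypothesis $\mu_{0}(\{\xi=0\})=0$ is the key point, and your added care about rest points and invariant cutoffs is reasonable, though the paper tacitly reads ``periodic'' as ``periodic with a common period.''
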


\begin{remark}
If $\phi_{t}^{H}$ is just periodic in $X:=H^{-1}\left(  E_{1},E_{2}\right)  $
for some $E_{1}<E_{2}$, then formula (\ref{formZ}) holds for functions $a\in
C_{c}^{\infty}\left(  X\right)  $.
\end{remark}

\begin{remark}
\label{RmkV}The conclusions of Theorem \ref{ThmZoll} and Proposition
\ref{PropRd} also hold for the solutions to the adimensional equation:
\[
i\partial_{t}v_{h}+\frac{1}{2}\Delta v_{h}-Vv_{h}=0,
\]
as they can be written as solutions the semiclassical equation (\ref{SEV})
with potential $h^{2}V$ evaluated at time $t/h$. Therefore, as an immediate
consequence of the proof, the conclusions of Theorem \ref{ThmInvEg} hold with
$\phi_{s}$ being the geodesic flow of $\left(  M,g\right)  $.
\end{remark}

\noindent\textbf{Acknowledgments. }The author wishes to thank Patrick
G\'{e}rard for having introduced him to this problem and for sharing with him
many interesting ideas. This work was initiated as the author was visiting the
\emph{Laboratoire de Mathematiques} at \emph{Universit\'{e} de Paris-Sud}. He
wishes to thank this institution for its kind hospitality. He also thanks the
anonimous referees for their comments and suggestions, which have considerably
increased the quality of the final version of this article.

\bigskip

\end{document}